\numberwithin{equation}{section}
\theoremstyle{plain}
\newtheorem{thm}{Theorem}[section]
\newtheorem{prop}[thm]{Proposition}
\newtheorem{lem}[thm]{Lemma}
\newtheorem{cor}[thm]{Corollary}
\theoremstyle{definition}
\newtheorem{defn}[thm]{Definition}
\theoremstyle{remark}
\newtheorem{rk}[thm]{Remark}
\newcommand{\ve}{\varepsilon}
\newcommand{\symD}{\mathbin{\Delta}}
\newcommand{\tr}{\operatorname{tr}}
\newcommand{\Ric}{\mathrm{Ric}}
\newcommand{\dist}{\mathrm{dist}}
\newcommand{\loc}{\mathrm{loc}}
\newcommand{\spt}{\operatorname{spt}}
\newcommand{\vol}{\operatorname{vol}}
\newcommand{\Rm}{\mathrm{Rm}}
\newcommand{\Div}{\operatorname{div}}
\newcommand{\csum}{\mathbin{\#}}
\DeclareFontFamily{U}{MnSymbolC}{}
\DeclareSymbolFont{MnSyC}{U}{MnSymbolC}{m}{n}
\DeclareFontShape{U}{MnSymbolC}{m}{n}{
	<-6>  MnSymbolC5
	<6-7>  MnSymbolC6
	<7-8>  MnSymbolC7
	<8-9>  MnSymbolC8
	<9-10> MnSymbolC9
	<10-12> MnSymbolC10
	<12->   MnSymbolC12}{}
\DeclareMathSymbol{\intprod}{\mathbin}{MnSyC}{'270}
\begin{document}
\title[Density, mass, and incompleteness]
{Density and positive mass theorems for incomplete manifolds}
\author{Dan A.\ Lee, Martin Lesourd, and Ryan Unger}

\address{Graduate Center and Queens College, City University of New York, 365 Fifth Avenue, New York, NY 10016}
\email{dan.lee@qc.cuny.edu}

\address{Black Hole Initiative, Harvard University, Cambridge, MA 02138}
\email{mlesourd@fas.harvard.edu}

\address{Department of Mathematics, Princeton University, Princeton, NJ 08544}
\email{runger@math.princeton.edu}

\maketitle

\begin{abstract}
For manifolds with a distinguished asymptotically flat end, we prove a density theorem which produces harmonic asymptotics on the distinguished end, while allowing for points of incompleteness (or negative scalar curvature) away from this end. We use this to improve the ``quantitative'' version of the positive mass theorem (in dimensions $3\leq n\leq 7$), obtained by the last two named authors with S.-T. Yau \cite{LUY21}, where stronger decay was assumed on the distinguished end. We also give an alternative proof of this theorem based on a relationship between MOTS and $\mu$-bubbles and our recent work on the spacetime positive mass theorem with boundary \cite{LLU}. 
\end{abstract}

\tableofcontents

\section{Introduction}

A natural question in scalar curvature geometry is whether minimal hypersurface and Dirac operator arguments can be effectively localized around a particular geometric feature. This is particularly important when working in ambient spaces that are noncompact, incomplete, or contain boundaries. In the case of minimal hypersurfaces, minimizing sequences are susceptible to various problems: they can escape every compact set and fail to converge, they can degenerate to something noncompact and unwieldy, or they can hit points of incompleteness and become singular. To get around these issues, M.~Gromov introduced the technique of $\mu$-bubbles as a way of forcing the minimizers to stay within a well understood and usually compact region of the ambient space  \cite{G96, G18}. With this technique, Gromov generalized R.~Schoen and S.-T. Yau's inductive descent method for closed manifolds to the setting of manifolds with boundaries. There is now a wealth of examples where $\mu$-bubbles have been used to study problems in scalar curvature curvature which previously seemed out of reach \cite{CL20, GromovFour, G20, Z20, Z21, LUY21, CLSZ21}.

 The second two authors of the present paper, together with Yau, investigated such a localization of the Riemannian positive mass theorem of Schoen and Yau \cite{SY79PMT}. By using a Plateau problem version of the $\mu$-bubble approach, they were able to prove a ``quantitative shielding'' version of the positive mass theorem for manifolds with one asymptotically Schwarzschild end and other arbitrary ends \cite[Theorem 1.7]{LUY21}, possibly incomplete and carrying negative scalar curvature. The positive mass theorem for complete manifolds with one asymptotically Schwarzschild end was obtained as a corollary of this quantitative version. 
 
The main result of the present paper is to relax the asymptotic decay required on the asymptotically flat end in the quantitative shielding theorem. Consequently, we also relax the decay assumption in the positive mass theorem obtained as a corollary, and moreover, we obtain a rigidity statement that does not follow from the results of \cite{LUY21}. We only assume Sobolev decay for the metric, that is, we assume  $g_{ij}-\delta_{ij}$ lies in a suitable weighted Sobolev space on the Euclidean end. These are the most general asymptotics under which the mass is invariantly defined \cite{BartnikMass, Chrusciel}. For precise definitions, see the beginning of Section \ref{preliminaries}. 

\begin{thm}[Quantitative shielding theorem]\label{thm1} 
Let $(M^n,g)$, $3\le n\le 7$, be an asymptotically flat manifold of Sobolev type $(p,q)$, with $p>n$ and $q>\frac{n-2}{2}$, 
not assumed to be complete or to have nonnegative scalar curvature everywhere. Let $U_0$, $U_1$, and $U_2$ be neighborhoods of an asymptotically flat end $\mathcal E$ such that  $\overline{U_2}\subset U_1$, $\overline{U_1}\subset U_0$, and $\overline {U_0 \setminus \mathcal E}$ is compact,
and let 
\[D_0=\dist_g(\partial U_0, U_1)\quad\text{and}\quad
D_1 =\dist_g(U_2,\partial U_1).\] 
If the following hold:
\begin{enumerate}
    \item $g$ has no points of incompleteness in $U_0$, 
    
    \item $R_g\ge 0$ on $U_0$, and 
    
    \item \label{scalar_bound} the scalar curvature satisfies the largeness assumption 
    \begin{equation}\label{largeness}
R_g>\frac{4}{D_0 D_1}\quad \text{on}\quad\overline{U_1}\setminus U_2,
    \end{equation}
\end{enumerate}
then the ADM mass of the asymptotically flat end $\mathcal E$ is strictly positive. 
\end{thm}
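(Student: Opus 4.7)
The plan is to reduce Theorem~\ref{thm1} to the earlier quantitative shielding result \cite[Theorem 1.7]{LUY21}, which reached the same conclusion under the stronger assumption of asymptotically Schwarzschild decay on $\mathcal E$. The reduction will be carried out via the density theorem promised in the abstract: for each $\varepsilon>0$ one produces a nearby metric $g_\varepsilon$ which coincides with $g$ outside a small neighborhood of $\mathcal E$, exhibits harmonic asymptotics on $\mathcal E$, retains $R_g\ge 0$ on $U_0$ and (a slightly weakened form of) the largeness bound \eqref{largeness} on the shell, and satisfies $|m_{g_\varepsilon}-m_g|<\varepsilon$.

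Concretely, I would first choose slightly shrunk nested neighborhoods $U_2\subset U_2'\subset\overline{U_2'}\subset U_1'\subset\overline{U_1'}\subset U_1$ and $\mathcal E\subset U_0'\subset\overline{U_0'}\subset U_0$ whose associated distances $D_0':=\dist_g(\partial U_0',U_1')$ and $D_1':=\dist_g(U_2',\partial U_1')$ are as close to $D_0, D_1$ as desired, so that \eqref{largeness} still holds strictly on $\overline{U_1'}\setminus U_2'$ with $(D_0',D_1')$ in place of $(D_0,D_1)$. Next, invoke the density theorem to obtain $g_\varepsilon$ as above, with the deformation supported in a small collar of $\mathcal E$ and $g_\varepsilon$ sufficiently $C^0$-close to $g$ that $\dist_{g_\varepsilon}$ and $\dist_g$ agree up to error $\varepsilon$ throughout $U_0$. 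For $\varepsilon$ small enough, $g_\varepsilon$ satisfies every hypothesis of \cite[Theorem 1.7]{LUY21} with respect to the triple $(U_0',U_1',U_2')$, the required asymptotic decay being a consequence of harmonic asymptotics. Applying that theorem yields $m_{g_\varepsilon}>0$, and letting $\varepsilon\to 0$ gives $m_g\ge 0$.

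To upgrade nonnegativity to strict positivity, one needs a uniform lower bound. The argument of \cite[Theorem 1.7]{LUY21} should in fact deliver a quantitative estimate of the form $m\ge c\bigl(D_0',D_1',\inf_{\overline{U_1'}\setminus U_2'}R_g-4/(D_0'D_1')\bigr)>0$, whose inputs are all stable along the approximating sequence; this gives $m_g\ge c>0$. The main obstacle is producing the density theorem itself: the classical Schoen--Yau/Corvino--Schoen construction of harmonic asymptotics is performed on a complete manifold and culminates in solving a global linear elliptic (conformal Laplace) equation on $M$, whereas here the ambient space may be incomplete and carry negative scalar curvature outside $U_0$. Localizing the conformal correction to a neighborhood of $\mathcal E$ while preserving $R_g\ge 0$ on $U_0$, not creating or enlarging any points of incompleteness, and maintaining sharp control of the ADM mass, is the delicate analytic step on which the whole reduction hinges.
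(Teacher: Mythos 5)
Your reduction of the inequality $m_{\mathrm{ADM}}(\mathcal E,g)\ge 0$ is essentially the paper's argument: apply the density theorem, check that for small $\ve$ the approximating metrics still satisfy the hypotheses of \cite[Theorem 1.7]{LUY21} (the largeness condition is a strict inequality on a compact shell and the metrics are uniformly equivalent, so this is automatic; your explicit shrinking of the neighborhoods is a harmless extra precaution), and pass to the limit in the mass. One caveat you gloss over: the constant $\frac{4}{D_0D_1}$ in \eqref{largeness} is \emph{better} than the one in \cite[Theorem 1.7]{LUY21}, so a verbatim citation of that theorem only yields the result under the weaker largeness hypothesis; the improved constant requires re-running the $\mu$-bubble construction with a sharper potential $h$, as is done in Section~\ref{MOTS}. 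Also, the density theorem does not produce a metric that literally coincides with $g$ away from $\mathcal E$ --- the conformal factor is global --- but its conclusions (4) and (5) supply exactly what the reduction needs, so this is only a presentational inaccuracy.

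The genuine gap is in the upgrade from $m\ge 0$ to $m>0$. You assert that \cite[Theorem 1.7]{LUY21} ``should in fact deliver'' a quantitative lower bound $m\ge c\bigl(D_0',D_1',\inf R_g-4/(D_0'D_1')\bigr)>0$ that is stable along the approximating sequence. No such effective lower bound on the mass is available from that theorem (which asserts strict positivity for each fixed metric, not a uniform bound over a family), and nothing in your argument produces one; as stated, your limit only gives $m_g\ge 0$. The correct mechanism, which is how the paper concludes, is the ``pushing down'' deformation of Proposition~\ref{pushingdown}: since $R_g>\frac{4}{D_0D_1}>0$ on the shell $\overline{U_1}\setminus U_2$, one can solve $-a\Delta_g u_\delta+\delta\eta R_g u_\delta=0$ for a cutoff $\eta$ supported where $R_g>0$ and pass to the conformal metric $u_\delta^{4/(n-2)}g$, which has strictly smaller ADM mass by \eqref{massformula}, preserves the sign of the scalar curvature pointwise, creates no new incomplete points, and (for $\delta$ small) still satisfies the open largeness condition. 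If $m_g=0$, this produces a metric with negative mass satisfying all the hypotheses, contradicting the nonnegativity already established. You need this (or an equivalent strict-mass-decreasing deformation) to close the argument.
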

\begin{rk}
We also note that inequality \eqref{largeness} improves the one given in \cite[Theorem 1.7]{LUY21}, thanks to a slightly more precise argument.  
\end{rk} 

The proof of Theorem \ref{thm1} rests on the following \emph{density theorem}, which sharpens and generalizes previous results in this direction \cite{SY81ii, LeeParker, Kuwert}.

\begin{thm}[Density theorem]\label{DensityTheorem}
Let $(M^n,g)$, $3\le n$, be a Riemannian manifold, not assumed to be complete, with an  asymptotically flat end $\mathcal E$ of Sobolev type $(p,q)$, where $p>n$ and $q>\frac{n-2}{2}$. 
For any $\ve>0$, $\frac{n-2}{2}<q'<q$, and any compact set $K\subset M$, there exists another asymptotically flat metric $\tilde g$ of Sobolev type $(p,q')$ on $M$ with the following properties: 
\begin{enumerate}
    \item  $\tilde g$ is harmonically flat outside a bounded set in $\mathcal E$, that is, $\tilde g=u^\frac{4}{n-2}\overline g$, where $\overline g$ is the Euclidean metric on $\mathcal E$, and $u$ is a $\overline g$-harmonic function with expansion 
    \[u(x)=1+\frac{A}{|x|^{n-2}}+O_\infty(|x|^{-n-1}),\]

    \item The ADM mass of $\tilde g$ is $2A$ and we have 
   \[\|\tilde g-g\|_{W^{2,p}_{-q'}(K\cup \mathcal E)}+\|R_{\tilde g}-R_g\|_{L^1(K\cup\mathcal E)}+|2A-m_\mathrm{ADM}(\mathcal E, g)|<\ve,\] 
    
    \item $\sup_K |R_{\tilde g}-R_g|<\ve$,
    
    \item If $R_g(x)\ge 0$ at a point $x$, then $R_{\tilde g}(x)\ge 0$, and
    
    \item The metrics $g$ and $\tilde g$ are $\ve$-close as bilinear forms everywhere on $M$:
    \[(1-\ve)g\le \tilde g\le (1+\ve)g.\]
\end{enumerate}
\end{thm}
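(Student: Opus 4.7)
My plan is to adapt the classical Schoen--Yau--Lee--Parker density construction---truncate the metric to Euclidean outside a large ball, then apply a conformal correction---to the present setting in which $(M,g)$ may be incomplete and carry negative scalar curvature away from the distinguished end.

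Fix coordinates on $\mathcal E\cong\{|x|>1\}$. For $\sigma$ so large that $K\subset\{|x|<\sigma\}$, choose a cutoff $\chi_\sigma\in[0,1]$ equal to $1$ on $(M\setminus\mathcal E)\cup\{|x|\le\sigma\}$ and $0$ on $\{|x|\ge 2\sigma\}$, and set $g_\sigma:=\chi_\sigma g+(1-\chi_\sigma)\bar g$. A standard weighted Sobolev estimate, using $g-\bar g\in W^{2,p}_{-q}$, gives $\|g_\sigma-g\|_{W^{2,p}_{-q'}(\mathcal E)}\to 0$ as $\sigma\to\infty$ for any $q'<q$; the decay loss pays for the $L^p$ tails in the transition annulus $\{\sigma<|x|<2\sigma\}$. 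I then look for $\tilde g=u_\sigma^{4/(n-2)}g_\sigma$, with $u_\sigma>0$ and $u_\sigma\to 1$ at infinity. Using the conformal change formula $R_{\tilde g}\,u_\sigma^{(n+2)/(n-2)}=L_{g_\sigma}u_\sigma$, where $L_{g_\sigma}:=-\tfrac{4(n-1)}{n-2}\Delta_{g_\sigma}+R_{g_\sigma}$, I impose
\[R_{\tilde g}=\eta R_g\]
for a second cutoff $\eta\in[0,1]$ equal to $1$ on a neighborhood of $K$ and compactly supported in $(M\setminus\mathcal E)\cup\{|x|<\sigma\}$. This single prescription automatically delivers (1) (outside $B_{2\sigma}$ the equation forces $u_\sigma$ to be $\bar g$-harmonic, giving the $A/|x|^{n-2}$ expansion), (4) ($\eta R_g$ has the pointwise sign of $R_g$), the $L^\infty(K)$ bound in (3) (since $R_{\tilde g}=R_g$ on $K$), and the $L^1(K\cup\mathcal E)$ bound in (2) (by taking $\eta$ close to $\chi_\sigma$ in $L^1$).

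Writing $u_\sigma=1+v_\sigma$, the equation becomes
\[L_{g_\sigma}v_\sigma=\eta R_g(1+v_\sigma)^{(n+2)/(n-2)}-R_{g_\sigma},\]
whose source at $v_\sigma=0$ is compactly supported and can be made arbitrarily small in $L^p_{-q'-2}$ by taking $\eta$ close to $\chi_\sigma$. The main obstacle is inverting $L_{g_\sigma}$ on weighted Sobolev spaces over $M$: neither completeness nor $R_g\ge 0$ is available to invoke a global maximum principle, and on the interior the metric may be singular. I expect to sidestep this by localizing to the end: solve the equation for $v_\sigma$ only on $\mathcal E$ with a Dirichlet condition on $\partial\mathcal E=\{|x|=1\}$, and then extend $v_\sigma$ back to $M$ via a smooth cutoff supported away from $\partial\mathcal E$, whose error term is absorbed into the fixed-point iteration. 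On $\mathcal E$, the flat conformal Laplacian $-\tfrac{4(n-1)}{n-2}\Delta_{\bar g}$ is an isomorphism $W^{2,p}_{-q'}(\mathcal E)\to L^p_{-q'-2}(\mathcal E)$ in the Dirichlet setting (Bartnik's theory, using $(n-2)/2<q'<n-2$), so $L_{g_\sigma}$ is invertible by perturbation with $\sigma$-uniform bounds, and a Banach fixed point produces $v_\sigma$ with $\|v_\sigma\|_{W^{2,p}_{-q'}(\mathcal E)}\to 0$. The remaining conclusions then follow almost for free: (5) by Sobolev embedding ($p>n$) and the $C^0$-smallness of both $u_\sigma-1$ and $g_\sigma-g$, and the ADM mass identification $m_{\mathrm{ADM}}(\tilde g)=2A$ in (2) from the explicit harmonic expansion of $u_\sigma$ at infinity together with Bartnik's continuity of mass under small $W^{2,p}_{-q'}$ perturbations.
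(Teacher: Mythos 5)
Your overall architecture (truncate $g$ to $\bar g$ outside a large ball, then conformally correct) is the same as the paper's, and your treatment of the end itself --- the $W^{2,p}_{-q'}$ smallness of $g_\sigma-g$ paid for by the decay loss $q\to q'$, the harmonicity of $u_\sigma$ outside $B_{2\sigma}$ giving (1), and the mass identification --- matches the paper's computations. The genuine gap is in how you produce the conformal factor. You propose to solve the (semilinear) equation only on $\mathcal E$ with Dirichlet data on $\partial\mathcal E$ and then extend $v_\sigma$ to $M$ by a cutoff $\theta$, ``absorbing the error into the fixed-point iteration.'' This cannot work as stated: the fixed-point map is defined only on $\mathcal E$ with Dirichlet boundary conditions, so the commutator error $-a\bigl(2\nabla\theta\cdot\nabla v_\sigma+v_\sigma\Delta\theta\bigr)$, supported in the transition annulus, is not seen by the iteration at all. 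In that annulus the identity $R_{\tilde g}=\eta R_g$ fails, and $R_{\tilde g}-R_g$ acquires a small but \emph{signless} term; at points where $R_g=0$ (or is positive but tiny) this makes $R_{\tilde g}<0$, destroying conclusion (4), which is a pointwise statement on all of $M$. Conclusion (3) is likewise at risk since $K$ is arbitrary and may contain the gluing region.

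The paper avoids this by never gluing: it solves the \emph{linear} equation $-a\Delta_{g_\lambda}u+(R_{g_\lambda}-\chi_\lambda R_g)u=0$ \emph{globally} on an exhaustion $M_\sigma=\{\rho\ge\sigma\}$ of the whole (possibly incomplete) manifold with Neumann conditions on $\partial M_\sigma$, and passes $\sigma\to 0$. The exact conformal formula $R_{\tilde g}=\chi_\lambda R_g\,u^{-4/(n-2)}$ then holds everywhere, so (3) and (4) are immediate from $u>0$ and the uniform smallness of $u-1$. The machinery you would need to replace your Dirichlet-plus-cutoff step is precisely Proposition \ref{Existence}: the potential $V=R_{g_\lambda}-\chi_\lambda R_g$ is supported in a fixed $M_{\sigma_0}$ away from the incomplete set, so $u$ is harmonic near $\partial M_\sigma$ and the Hopf lemma gives $\sup_{M_\sigma}|u-1|=\sup_{M_{\sigma_0}}|u-1|$; injectivity comes from the Sobolev inequality on $M_\sigma$ \emph{without} boundary conditions together with $\|V^-\|_{L^{n/2}}$ small, not from Bartnik's exterior Dirichlet theory. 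Without some such global solvability statement on $M$ itself, the theorem's global conclusions (4) and (5) do not follow from an end-localized solution.
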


\begin{rk}
It will be clear from the proof of the density theorem that we can also accommodate the case when $M$ additionally has compact boundary components. 
\end{rk}

Theorem \ref{DensityTheorem} yields the positive mass theorem for complete asymptotically flat manifolds that have one distinguished asymptotically flat end (with Sobolev decay) and arbitrary other ends by application of \cite{LUY21}. The rigidity statement follows from essentially standard arguments within our analytic framework. 

\begin{thm}\label{pmt}
Let $(M^n,g)$, $3\le n\le 7$, be a complete manifold with nonnegative scalar curvature, and suppose that it has at least one asymptotically flat end $\mathcal E$ of Sobolev type $(p,q)$, $p>n$ and $q>\frac{n-2}{2}$. Then the ADM mass of $\mathcal E$ is nonnegative. Furthermore, if the mass is zero, then $(M,g)$ is isometric to Euclidean space. 
\end{thm}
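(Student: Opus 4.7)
The theorem splits into the nonnegativity of mass and the rigidity case $m_{ADM}(\mathcal E,g)=0$.

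For nonnegativity, fix $\ve>0$ and apply Theorem~\ref{DensityTheorem} to produce a metric $\tilde g$ on $M$ which has harmonic asymptotics on $\mathcal E$, satisfies $R_{\tilde g}\ge 0$ everywhere (by property~(4), since $R_g\ge 0$), satisfies $(1-\ve)g\le \tilde g\le (1+\ve)g$ (so that $\tilde g$ is also complete), and satisfies $|m_{ADM}(\mathcal E,\tilde g)-m_{ADM}(\mathcal E,g)|<\ve$. Since $(M,\tilde g)$ is a complete asymptotically flat manifold with one distinguished asymptotically Schwarzschild end and $R_{\tilde g}\ge 0$ everywhere, the positive mass theorem~\cite{SY79PMT, LUY21} gives $m_{ADM}(\mathcal E,\tilde g)\ge 0$, and letting $\ve\to 0$ yields $m_{ADM}(\mathcal E,g)\ge 0$.

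For rigidity, assume $m_{ADM}(\mathcal E,g)=0$ and argue in three standard steps. \emph{Step 1 (scalar-flatness):} if $R_g\not\equiv 0$, solve $-\tfrac{4(n-1)}{n-2}\Delta_g u+R_g u=0$ on $M$ with $u\to 1$ on $\mathcal E$. The positive solution (which exists by standard theory on the complete AF manifold $(M,g)$ with $R_g\ge 0$) has expansion $u=1+A|x|^{2-n}+o(|x|^{2-n})$ with $A<0$ (by the maximum principle applied to the subharmonic function $u$), so $\hat g=u^{4/(n-2)}g$ is complete and scalar-flat with $m_{ADM}(\mathcal E,\hat g)=m_{ADM}(\mathcal E,g)+2A<0$, contradicting nonnegativity. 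Hence $R_g\equiv 0$. \emph{Step 2 (Ricci-flatness):} for a compactly supported symmetric $(0,2)$-tensor $h$, set $g_t=g+th$, conformally correct each to a scalar-flat metric $\tilde g_t$, and differentiate in $t$; the first variation of $m_{ADM}(\mathcal E,\tilde g_t)$ is proportional to $-\int_M\langle \Ric_g,h\rangle\,dV_g$, and since this mass is nonnegative with equality at $t=0$, the integral must vanish for every such $h$, forcing $\Ric_g\equiv 0$. \emph{Step 3 (flatness):} a complete Ricci-flat manifold with an asymptotically flat end has Euclidean volume growth there, saturating the Bishop--Gromov comparison, whose rigidity case then gives $(M,g)\cong(\R^n,g_{\mathrm{Eucl}})$.

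The analytic content lies almost entirely in Step~2, which requires, within the $W^{2,p}_{-q'}$ framework of Theorem~\ref{DensityTheorem}, establishing the existence, smooth $t$-dependence, and correct asymptotic expansion of the conformal factors $u_t$ for the family $g_t$, together with the first-variation formula for the ADM mass. Once these analytic facts are in place, the remaining logic is classical, going back to \cite{SY81ii, LeeParker}.
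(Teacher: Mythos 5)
Your nonnegativity argument and Steps~2--3 of the rigidity argument follow the paper's proof closely: the density theorem reduces to the harmonically flat (hence asymptotically Schwarzschild) case handled by \cite{LUY21}, the Ricci-flatness step is the same first-variation-of-mass computation (the paper simply takes $h=\eta\,\Ric_g$ directly rather than an arbitrary compactly supported $h$), and the final Bishop--Gromov step is the one the paper uses (the paper inserts a Cheeger--Gromoll argument to rule out other ends before quoting $\lim_{r\to\infty}V(r)=1$; your shortcut is defensible because the upper bound $V(r)\le 1$ comes for free from monotonicity once $\Ric_g=0$, while the lower bound only uses the distinguished end, but you should say this explicitly).

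The genuine gap is Step~1. You propose to solve $-a\Delta_g u+R_gu=0$ on all of $M$ with $u\to1$ on $\mathcal E$, ``by standard theory on the complete AF manifold.'' But $M$ is only assumed to have \emph{one} asymptotically flat end; the other ends are completely arbitrary, and this is exactly the situation in which the standard existence theory (Fredholm theory on weighted spaces, or exhaustion arguments with barriers at every infinity) breaks down. Even if one produces a positive solution $u$ as a limit of Dirichlet or Neumann solutions on an exhaustion, there is no control preventing $u\to0$ along the other ends, in which case $\hat g=u^{4/(n-2)}g$ may be \emph{incomplete} and the already-proved mass inequality (which is for complete manifolds) cannot be applied to it. This degeneration is not hypothetical --- it occurs for the minimal conformal Green's function in Schoen--Yau's work on the Liouville theorem --- and avoiding it is the entire point of the paper's analytic framework: Propositions~\ref{Existence} and~\ref{ConfFactConst} only admit potentials that are \emph{compactly supported} and \emph{small}, precisely so that $\sup_M|u-1|$ is controlled by the maximum principle plus estimates on $\mathcal E$ and completeness is preserved. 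The paper's Step~1 therefore does not use the full potential $R_g$: Proposition~\ref{pushingdown} perturbs by $\delta\eta R_g$ with $\eta$ supported in a single ball where $R_g>0$, which keeps $u$ uniformly close to $1$, keeps $R_{\tilde g}=(1-\delta\eta)R_gu^{-4/(n-2)}\ge0$, preserves completeness, and still strictly decreases the mass by \eqref{massformula}. Replacing your Step~1 by an appeal to Proposition~\ref{pushingdown} repairs the proof; note the same caution applies in your Step~2, where the conformal correction of $g_t$ must likewise be done with the compactly supported potential $R_{g_t}$ (which is supported in $\spt\eta$ since $R_g\equiv0$) via Propositions~\ref{Existence} and~\ref{ConfFactConst}, not by generic global solvability.
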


As this paper was being completed, we learned of the concomitant and very interesting preprint of Jie Chen, Peng Liu, Yuguang Shi, and Jintian Zhu \cite{CLSZ21} which contains a positive mass theorem of this kind, along with the natural rigidity (see in particular \cite[Theorem 1.8]{CLSZ21}). In fact, their theorem applies to a more general class of asymptotically flat manifolds ``with fiber $F$''.\footnote{In \cite{CLSZ21}, the manifolds are assumed to be complete, and to have a distinguished end which is asymptotic to the product of a Euclidean end with a closed flat manifold, along with  incompressibility assumptions.
They also assume that the metric on the distinguished end satisfies pointwise $C^2$ decay rather than Sobolev.}
Their proof of positive mass involves two main steps: a density result \cite[Proposition 4.11]{CLSZ21} that applies to complete manifolds with a distinguished asymptotically flat end with fiber $F$ and nonnegative scalar curvature, followed by a reduction argument (different from Lohkamp's \@ \cite{L99}) to one of the geometric theorems about positive scalar curvature they prove in their paper. The statements and proofs of \cite[Proposition 4.11]{CLSZ21} and Theorem \ref{DensityTheorem} differ slightly, but they involve essentially the same ideas. As for the rigidity arguments of \cite[Theorem 1.8]{ CLSZ21} and Theorem \ref{pmt}, the proofs of Ricci flatness and the final contradiction are obtained slightly differently, though the more general topologies considered in \cite{CLSZ21} are an extra complication. We note the interesting possibility that the arguments of \cite[Theorem 1.8]{CLSZ21} can be combined with those of the current paper to yield generalizations of Theorem \ref{DensityTheorem} and Theorem \ref{thm1} that apply to the asymptotically flat ends ``with fiber $F$'' considered in \cite[Theorem 1.8]{CLSZ21}.

Theorem~\ref{thm1} also implies an inextendibility result: Given an asymptotically flat end $\mathcal E$ with nonnegative scalar curvature and negative mass, the positive mass theorem (Theorem~\ref{pmt}) tells us that it is impossible to extend $\mathcal E$ to be a complete manifold with nonnegative scalar curvature. The following corollary states that, in fact, there is a \emph{fixed distance} $D$ that puts a limit on how far we can extend the metric away from $\mathcal E$ before hitting either a point of incompleteness or a point of negative scalar curvature.

\begin{cor}\label{CZ1}
Let $(M^n,g)$,  $3\le n\le 7$, be a Riemannian manifold, not assumed to be complete, with an  asymptotically flat end $\mathcal E$ of Sobolev type $(p,q)$, where $p>n$ and $q>\frac{n-2}{2}$. 
 If $m_\mathrm{ADM}(\mathcal E,g)<0$, then there exists a constant $D$, depending only on $m_\mathrm{ADM}(\mathcal E,g)$ and $\|g-\delta\|_{W^{2,p}_{-q}(\mathcal E)}$, with the following property. In the $D$-neighborhood $N_D(\mathcal E)$ of $\mathcal E$, one or both of the following must be true:
\begin{enumerate}
 \item $R_g<0$ somewhere in $N_D(\mathcal E)$, or

    \item $N_D(\mathcal E)$ contains an incomplete point. 
\end{enumerate}
\end{cor}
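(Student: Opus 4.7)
The plan is to argue by contradiction using Theorem~\ref{thm1}. Writing $m := -m_\mathrm{ADM}(\mathcal{E}, g) > 0$, suppose for a constant $D = D(m, \|g-\delta\|_{W^{2,p}_{-q}(\mathcal{E})})$ still to be determined that neither (1) nor (2) holds in $N_D(\mathcal{E})$, so that $R_g \geq 0$ on $N_D(\mathcal{E})$ and $g$ is complete there. I will deform $g$ to a nearby metric $\tilde g$ agreeing with $g$ near infinity in $\mathcal{E}$ (hence with the same ADM mass) for which Theorem~\ref{thm1} applies and yields $m_\mathrm{ADM}(\mathcal{E}, \tilde g) > 0$, contradicting $m_\mathrm{ADM}(\mathcal{E}, \tilde g) = -m$.

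First choose concentric neighborhoods $U_2 = N_{D/6}(\mathcal{E}) \subset U_1 = N_{D/3}(\mathcal{E}) \subset U_0 = N_{D/2}(\mathcal{E})$, so that $D_0, D_1 \geq D/6$ and the threshold in Theorem~\ref{thm1}(3) satisfies $4/(D_0 D_1) \leq 144/D^2$. Completeness of $g$ on the larger $N_D(\mathcal{E})$ gives compactness of $\overline{U_0 \setminus \mathcal{E}}$, and hypotheses (1), (2) for $(M, g)$ with these neighborhoods are immediate; only (3) requires work.

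To engineer (3), I take $\tilde g = u^{4/(n-2)} g$ with $u = 1 + v$, where $v$ is constructed to be supported in a compact region $K \Subset U_0$ disjoint from a neighborhood of infinity in $\mathcal{E}$. Fixing a smooth $f \geq 0$ supported in the interior of $K$ with $f > 144/D^2$ on a neighborhood of $\overline{U_1} \setminus U_2$, I solve the Dirichlet problem
\[
-c_n \Delta_g v + R_g v = f \text{ in } K, \qquad v|_{\partial K} = 0,
\]
where $c_n = 4(n-1)/(n-2)$. Since $R_g \geq 0$ on $K$, the maximum principle yields a unique solution $v \geq 0$. The standard conformal formula then gives $R_{\tilde g} = u^{-(n+2)/(n-2)}(R_g + f) \geq 0$ in the interior of $K$, strictly greater than $144/D^2$ on the annulus. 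Because $u \equiv 1$ outside $K$ and $K$ is disjoint from a neighborhood of infinity, $\tilde g = g$ near infinity and $m_\mathrm{ADM}(\mathcal{E}, \tilde g) = -m$. Applying Theorem~\ref{thm1} to $(M, \tilde g)$ then yields the contradiction.

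The main obstacle is the regularity of $u$ across $\partial K$: the Dirichlet solution $v$ vanishes on $\partial K$ but its normal derivative generally does not, so the naive extension by $0$ produces $u$ only in $W^{1,p}$, whereas Theorem~\ref{thm1} requires Sobolev type $(p, q)$ regularity for $\tilde g$. To produce $\tilde g$ of the required regularity while keeping $R_{\tilde g} \geq 0$ on all of $U_0$, I introduce a smooth cutoff $\psi$ supported in the interior of $K$ and replace $u$ with $1 + \psi v$. This generates error terms in $R_{\tilde g}$ of the form $\Delta \psi \cdot v$ and $\nabla \psi \cdot \nabla v$ in the transition region, which are not of definite sign; controlling them requires either choosing $\psi$ so that the correction is dominated by $f$, or first applying the density theorem (Theorem~\ref{DensityTheorem}) to pass to a harmonic-asymptotics representative, where the transition region can be placed in the harmonically flat portion and the correction computed explicitly. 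This quantitative control is where the dependence $D = D(m, \|g - \delta\|_{W^{2,p}_{-q}(\mathcal{E})})$ ultimately enters, through Schauder estimates for $v$ in terms of the $L^p$ norm of $R_g$, which is in turn bounded by $\|g - \delta\|_{W^{2,p}_{-q}(\mathcal{E})}$.
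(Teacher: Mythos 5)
Your overall strategy (argue by contradiction and manufacture hypothesis \eqref{scalar_bound} of Theorem~\ref{thm1} by a conformal deformation) is the right one, but where you place the deformation breaks the statement you are trying to prove, and the regularity issue you flag at the end is not a technicality you can defer. You put the annulus $\overline{U_1}\setminus U_2$ in the core, at distance between $D/6$ and $D/3$ from $\mathcal E$, and produce the conformal factor by a Dirichlet problem on a compact set $K$ in the core. To verify \eqref{largeness} you need an upper bound on $u=1+v$: since $R_{\tilde g}=u^{-\frac{n+2}{n-2}}(R_g+f)$ and $u\ge 1$, the inequality $f>144/D^2$ does \emph{not} give $R_{\tilde g}>144/D^2$ unless $\sup v$ is controlled; and to control the cutoff errors $\Delta\psi\cdot v+2\nabla\psi\cdot\nabla v$ you need bounds on $\nabla v$ as well. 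All of these come from elliptic estimates on $K$, whose constants depend on the geometry of $K$ --- a piece of the core about which the hypotheses say nothing beyond $R_g\ge0$ and completeness. They are not bounded by $\|R_g\|_{L^p}$, and certainly not by $\|g-\delta\|_{W^{2,p}_{-q}(\mathcal E)}$, which only sees the end. Worse, $K$ depends on $D$, so the choice of $D$ becomes circular: $D$ must dominate constants that depend on $K$, which depends on $D$ and on the unknown core geometry. Hence your $D$ cannot have the dependence asserted in the corollary. The cutoff problem compounds this: the error terms are of indefinite sign in a transition region where $f$ need not help, and neither of your two proposed fixes is actually carried out.

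The paper avoids both problems by performing the deformation \emph{on the end}, where the geometry is controlled by $\|g-\delta\|_{W^{2,p}_{-q}(\mathcal E)}$. It applies Proposition~\ref{bumpingup}: solve $-a\Delta_g u_\delta-\delta f u_\delta=0$ globally via the Neumann exhaustion of Propositions~\ref{Existence} and~\ref{ConfFactConst}, with $f\ge0$ a fixed radial profile supported in $\mathcal E$. The factor $u_\delta$ is then globally regular and uniformly bounded above and below (no cutoff needed), the mass increases by less than $\ve<|m_{\mathrm{ADM}}(\mathcal E,g)|$ so remains negative, and $R_{\tilde g}=(R_g+\delta f)u_\delta^{-4/(n-2)}\ge c\,f(|x|)$ on a fixed coordinate annulus in $\mathcal E$ with $c$ depending only on the allowed parameters. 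One then chooses the $U_i$ so that this annulus is $\overline{U_1}\setminus U_2$ and takes $D_0$ (hence $D$) large enough that $4/(D_0D_1)<c\inf_{\mathrm{ann}}f$; hypotheses (1) and (2) on the large set $U_0$ come for free from the contradiction hypothesis because $R_{\tilde g}\ge0$ wherever $R_g\ge0$ and $u_\delta$ is uniformly bounded. To repair your argument, replace the Dirichlet problem on $K$ by Proposition~\ref{bumpingup} and move the shield onto $\mathcal E$.
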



Finally, in Section \ref{MOTS} we explain how $\mu$-bubbles can be viewed as a special case of marginally outer trapped surfaces (MOTS). In particular, we give a new proof of the quantitative shielding theorem, Theorem \ref{thm1}, that does not rely on the earlier resuls of~\cite{LUY21}. This new proof is achieved by constructing an unphysical second fundamental form $k$, inspired by the $\mu$-bubble construction, satisfying the dominant energy condition (DEC) and hiding the other ends, incompleteness, and negative scalar curvature behind an outer trapped surface. We can then apply the spacetime positive mass theorem \cite{SY81i, E13, EHLS} with a boundary, which was recently resolved (including rigidity) by the present authors \cite{LLU} (see also \cite{GallowayLee}). This relationship between $\mu$-bubbles and MOTS provides a link between problems in scalar curvature and initial data sets satisfying the DEC which may be useful in other contexts.  \\ \indent 
As an application of this method, we prove the following theorem where $M$ now has a \emph{non-mean convex}\footnote{Traditionally, the boundary mean curvature in the Riemannian PMT is measured with respect to the outward normal. However, in view of conventions used in the spacetime positive mass theorem with boundary (see \cite{GallowayLee, LLU}), we opt to state Corollary \ref{CZ2} with respect to the inner pointing normal.}  boundary with respect to the normal pointing out of the manifold. (The case of mean convex boundaries is already implicitly contained in the original work of Schoen--Yau \cite{SY79PMT, SY81ii}.)

\begin{thm}\label{CZ2}
Let $(M^n,g)$, $3\le n\le 7$, be a complete asymptotically flat manifold with nonempty compact boundary $\partial M$. Set $U_0=M$, and let $U_1$, $U_2$, $D_0$, $D_1$ be as in Theorem~\ref{thm1}, with the exception of item~\eqref{scalar_bound}. If we instead assume that
\begin{enumerate}
    \item \label{CZ2a} $R_g>\kappa$ on $\overline U_1\setminus U_2$ for a positive constant $\kappa$, 
    \item \label{CZ2b} the parameters satisfy $ \kappa < \frac{4}{D_0 D_1}$ (see Remark \ref{rk:a} below), and
    \item \label{CZ2c} the mean curvature of $\partial M$ with respect to the normal pointing into $M$ satisfies
\begin{equation}\label{Hineq}
H<\frac{2\kappa D_1}{4-\kappa D_0 D_1}, \end{equation}
\end{enumerate} 
then the ADM mass is strictly positive. 
\end{thm}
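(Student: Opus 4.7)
Following the MOTS-based strategy advertised in the introduction, the plan is to construct an unphysical symmetric $2$-tensor $k$ on $M$, compactly supported in $M\setminus U_2$, such that $(M,g,k)$ satisfies the dominant energy condition and $\partial M$ is strictly outer trapped in the MOTS sense, with the outer direction being the normal pointing into $M$ (toward $\mathcal E$). Once such a $k$ is produced, the spacetime positive mass theorem with boundary of \cite{LLU} applies: the compact support of $k$ forces the spacetime ADM energy and momentum of $(M,g,k)$ to equal $m_\mathrm{ADM}(\mathcal E,g)$ and $0$, respectively, while the strict trapping of $\partial M$ rules out the rigidity case and yields $m_\mathrm{ADM}(\mathcal E,g)>0$.

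The natural ansatz is $k=-\phi(\rho)\,g$, where $\rho$ is a smooth approximation of $\dist_g(\cdot,\partial M)$ with $|\nabla\rho|\le 1$, and $\phi\colon[0,\infty)\to[0,\infty)$ is non-increasing and supported in $[0,D_0+D_1]$. A direct computation gives
\[
\mu=\tfrac12\bigl(R_g+n(n-1)\phi^2\bigr)\qquad\text{and}\qquad |J|\le (n-1)|\phi'(\rho)|,
\]
so DEC becomes the pointwise Riccati-type inequality $-\phi'\le \tfrac{R_g}{2(n-1)}+\tfrac{n}{2}\phi^2$. Since the MOTS outer normal to $\partial M$ coincides with the inward Riemannian normal, the trapping condition $\theta_+\le 0$ on $\partial M$ reads $H\le (n-1)\phi(0)$, strictly for strict trapping.

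Constructing $\phi$ is therefore an ODE problem. Integrate the Riccati inequality backwards from $\phi(D_0+D_1)=0$: on the interval $[D_0,D_0+D_1]$ one uses $R_g>\kappa$ to propagate $\phi$ backwards to some value $\phi_1:=\phi(D_0)>0$ that depends on $\kappa$, $D_1$, and $n$; on $[0,D_0]$, where only $R_g\ge 0$ is available, the worst-case solution of $-\phi'=\tfrac{n}{2}\phi^2$ extends $\phi$ to $\phi(0)=\phi_1/\bigl(1-\tfrac{n}{2}\phi_1 D_0\bigr)$, finite as long as the denominator is positive. The restrictions $\kappa D_0 D_1<4$ and \eqref{Hineq}, with a small $\varepsilon$-cushion, are precisely what allow $(n-1)\phi(0)>H$ to be achieved strictly. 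On $U_2$ one has $\phi\equiv 0$, so $k=0$ and DEC there reduces to $R_g\ge 0$, which holds.

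The main technical obstacle is carrying out the Riccati analysis sharply across the two regions, together with smoothing of $\rho$ and of $\phi$ at the endpoints of its support so that $k$ is genuinely smooth. Once $\phi$ is in hand, DEC and the strict outer trapping of $\partial M$ are immediate from the construction, and Theorem~\ref{CZ2} follows from the spacetime positive mass theorem with boundary in \cite{LLU}.
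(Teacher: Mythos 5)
Your overall strategy---build a pure-trace, compactly supported $k$ so that $(M,g,k)$ satisfies the DEC with $\partial M$ strictly outer trapped, then invoke the spacetime positive mass theorem with boundary of \cite{LLU}---is exactly the one used in the paper, and your pointwise computations for $k=-\phi\, g$ are correct: after the substitution $h=(n-1)\phi$ they agree with Proposition \ref{stableMOTS}. The gap is in the construction of $\phi$. You take $\phi=\phi(\rho)$ with $\rho=\dist_g(\cdot,\partial M)$ and run the Riccati inequality on the slabs $\{0\le\rho\le D_0\}$ and $\{D_0\le\rho\le D_0+D_1\}$, using $R_g>\kappa$ on the second slab. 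But the hypothesis gives $R_g>\kappa$ only on $\overline{U_1}\setminus U_2$, and the definitions of $D_0,D_1$ yield only the one-sided inclusions $U_1\subset\{\rho\ge D_0\}$ and $U_2\subset\{\rho\ge D_0+D_1\}$; since $D_0$ is an infimum, there may well be points of $M\setminus U_1$ with $\rho$ anywhere in $(D_0,D_0+D_1)$, where only $R_g\ge0$ is available. At such a point the DEC forces $-\phi'(\rho)\le\tfrac n2\phi(\rho)^2$ with no help from $\kappa$, and this is fatal: the backward comparison ODE $-\phi'=\tfrac n2\phi^2$ with terminal value $\phi(D_0+D_1)=0$ has only the trivial solution, so any profile that lifts off from zero on that slab must violate $-\phi'\le\tfrac n2\phi^2$ near $\rho=D_0+D_1$ (indeed your profile has $-\phi'\approx\tfrac{\kappa}{2(n-1)}>0$ there while $\phi\approx 0$, so $\mu-|J|\approx\tfrac12 R_g-\tfrac{\kappa}{2}<0$ wherever $R_g$ is small). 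Nothing in the hypotheses excludes such points, so your $k$ need not satisfy the DEC.

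The repair is to decouple the two mechanisms, as the paper does: write the potential as a product $h=\vartheta h_1$, where $\vartheta$ is a cutoff equal to $0$ on $U_2$ and $1$ off $U_1$ with $|\nabla\vartheta|\le(1+\ve)D_1^{-1}$, so that the ``$\kappa$-consuming'' gradient is supported exactly in $\overline{U_1}\setminus U_2$ where $R_g>\kappa$ actually holds, while $h_1=2/(\alpha-\tilde\rho)$ depends on the (smoothed) distance $\tilde\rho$ to $U_1$, is constant on $U_1$, and satisfies $2|\nabla h_1|\le(1+\ve)h_1^2\le\tfrac{n}{n-1}h_1^2$, so that $R_g\ge0$ suffices off the annulus. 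Taking $\alpha=\tfrac{4(1+\ve)}{\kappa D_1}$ preserves the DEC on the annulus, and since $\tilde\rho\ge D_0-\ve$ on $\partial M$ one gets $h\ge 2/(\alpha-D_0+\ve)$ there, which reproduces \eqref{Hineq} as $\ve\to0$. A secondary point: even granting your slab decomposition, the constants do not come out as you claim. Saturating $-\phi'=\tfrac n2\phi^2$ on $[0,D_0]$ requires $\kappa D_0D_1<\tfrac{4(n-1)}{n}$ (not $<4$) for your denominator to stay positive and yields a threshold different from \eqref{Hineq}; the stated constant corresponds to the inner profile $-\phi'=\tfrac{n-1}{2}\phi^2$, which is what $h_1$ above produces.
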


\begin{rk}\label{rk:a}
Fixing $D_0$ and $D_1$ in \eqref{Hineq} and letting $\kappa\to 0$ recovers the classical condition $H\le 0$. If  $ \kappa \ge \frac{4}{D_0 D_1}$, then Theorem \ref{thm1} applies without any consideration of the boundary mean curvature.
\end{rk}

\begin{rk}[Spinor methods]
Using Dirac operators on a space with a strong weight on the other ends, R.~Bartnik and P.~Chru\'sciel are able to prove a remarkable \emph{spacetime} positive mass theorem with arbitary ends under a spin assumption \cite{BC}.
 After \cite{LUY21} appeared, S.~Cecchini and R.~Zeidler revisted the Riemannian positive mass theorem in the spin setting \cite{CZ21}. They use Callias operators (i.e.\ Dirac operators with a potential) as a localization tool. They obtain analogues of all our results stated above, sans the density theorem \ref{DensityTheorem}, which is not needed for spin arguments. Also, their asymptotics are slightly stronger than ours, but we do not believe that this is essential. We would also like to point the reader to the very interesting paper \cite{CZ20}.
\end{rk}

\vspace{3mm}

\textbf{Acknowledgements}. M.L.\ thanks the Gordon and Betty Moore and
the John Templeton foundations for supporting the research carried out
at Harvard’s Black Hole Initiative. The authors thank Tin-Yau Tsang for several helpful comments that were incorporated into the second version of the paper.

\section{Asymptotic analysis in the presence of arbitrary ends} \label{preliminaries}

We begin this section with a precise statement of our definitions, which are slightly different than the usual ones because we allow for incompleteness and arbitrary ends.  

\begin{defn}\label{incomplete}
	Let $(X,d)$ be a metric space and $(\overline X,d)$ be its completion. For example, $\overline X$ can be constructed by taking appropriate equivalence classes of Cauchy sequences. A point in $\overline X\setminus X$ is called a called a \emph{point of incompleteness} for $X$. A set $S\subset X$ is said to be \emph{complete} if its closure in $X$ remains closed under the inclusion $X\to\overline X$.   
\end{defn}

\begin{defn}\label{strinfty}
Let $M^n$ be a noncompact manifold with a distinguished end $\mathcal E$. We say that $(M,g)$ possesses a \emph{structure of infinity} along $\mathcal E$ if $\mathcal E$ possesses no points of incompleteness and there exists a diffeomorphism 
\begin{equation}\Phi:\mathcal E\to \Bbb R^n\setminus B_{r_0}\end{equation} for some positive number $r_0$
and the coordinate norm $|x|$ diverges as we go out along the end. The set $\mathring M=M\setminus \mathcal E$ is called the \emph{core}. Note that in our definition, the core is not assumed to be compact and $(M,g)$ is not assumed to be complete. We will often identify $\mathcal E$ with the set $\{|x| \ge r_0\}$. The coordinates $x^i$ induce a natural flat metric on $\mathcal E$, which we extend arbitrarily to a complete metric on all of $M$ and denote by $\overline g$. 

We also allow for $M$ to have a boundary, but of course require $\partial M$ to not intersect~$\mathcal E$.
\end{defn}

\begin{defn}\label{sobolev}
Let $(M^n, g)$, $\mathcal E$, and $\Phi$ be as in the previous definition.  Let $N$ be a closed subset of $M$ which contains $\mathcal E$ and such that $N\setminus \mathcal E$ is compact. (For example, we might take $N=\mathcal E$.) Given $k\in\Bbb N$, $p\ge 1$, and $s\in\Bbb R$, we define the \emph{weighted Sobolev space} $W^{k,p}_s(N)$ to be the space of functions $u\in W^{k,p}_\loc(N)$ with finite norm 
\[\|u\|_{W^{k,p}_s(N)}=\|u\|_{W^{k,p}(N\setminus \mathcal E)}+\sum_{i=0}^k \|\partial^i u\|_{L^p_{s-i}(\mathcal E)},\]
where the weighted $L^p$ norm is defined by 
\[\|u\|_{L^p_s(\mathcal E)}=\left(\int_\mathcal E ||x|^{-s}u|^p \,\frac{dx}{|x|^n}\right)^\frac 1p.\]
Note that $r^s\notin L^p_s$ but $r^{s-\delta}\in L^p_s$ for any $\delta>0$. Note also that $L^p_{s'}\subset L^p_{s}$ if $s'\ge s$. We also remark that in our definition, the weighted spaces are to be constructed relative to the reference metric $\overline g$ and do not reference the (later) geometric metric $g$ at all. This is because $g$ will be changing at some points in the proof and we do not wish to have the norm changing as well. (This is only a minor point.)
\end{defn}

\begin{defn}\label{defAF}
Let $(M^n,g)$ be a noncompact smooth Riemannian manifold possessing a structure of infinity $\Phi$ along $\mathcal E$. Let $p>n$ and $q>\frac{n-2}{2}$. We say that $\mathcal E$ is \emph{asymptotically flat (AF) of Sobolev type $(p,q)$} if in the coordinates $x^i$ defined by $\Phi$, 
\[g_{ij}-\delta_{ij}\in W^{2,p}_{-q}(\mathcal E).\]
Furthermore, we assume that the scalar curvature of $g$, $R_g$, lies in $L^1(\mathcal E)$. A $(p,q)$ Sobolev asymptotically flat metric is also $(p,q')$ Sobolev asymptotically flat for any $q'<q$.

The \emph{ADM mass of $\mathcal E$} is defined by (see \cite{Lee} for history and discussion)
\begin{equation}m_\mathrm{ADM}(\mathcal E,g)=\lim_{r\to\infty}\frac{1}{2(n-1)\omega_{n-1}}\int_{|x|=r} (g_{ij,i}-g_{ii,j})\frac{x^j}{|x|}\,d\mu_{S_r,\overline g}.\end{equation}
The condition $R_g\in L^1(\mathcal E)$ guarantees that this is well-defined. 
\end{defn}

\begin{rk}
Our definition of asymptotically flat is the weakest ``standard" definition used in the positive mass literature. A stronger condition would be \emph{asymptotically flat with pointwise decay $q$}, that is,
\[g_{ij}(x)=\delta_{ij}+O_2(|x|^{-q}).\]
This will be of Sobolev type $(p,q)$ for any $p>n$ and $q'<q$. Yet another definition would be with weighted H\"older spaces. 

One reason to consider Sobolev decay is that Jang graphs in asymptotically flat initial data sets with pointwise decay are only Sobolev asymptotically flat \cite[Proposition~7]{E13}.
\end{rk}

Since $p>n$, the weighed Morrey inequality implies $g_{ij}-\delta_{ij}\in C^{1,\alpha}_{-q}$ for some $\alpha\in(0,1)$. We define the \emph{pointwise decay constant} $C_g<\infty$ of $g$ by \begin{equation}\label{pointwise}|g_{ij}-\delta_{ij}|+|x|^{-1}|\partial_kg_{ij}|\le C_g|x|^{-q}.\end{equation}
 
The following lemma is standard, cf.\ \cite[Lemma 3.35]{Lee}.

\begin{lem}\label{mass_conv}
Suppose that $g_i$ is a sequence of $(p,q)$ Sobolev asymptotically flat metrics converging in $W^{2,p}_{-q}(N)$ for $N$ as in Definition \ref{sobolev}. Assume that $R_{g_i}\to R_g$ in $L^1(\mathcal E)$. Then 
\[m_\mathrm{ADM}(\mathcal E, g_i)\to m_\mathrm{ADM}(\mathcal E,g).\]
\end{lem}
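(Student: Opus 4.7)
The approach I would take is the standard one based on reexpressing the ADM mass as a bulk integral plus a boundary correction at a fixed radius. First, direct computation about $\overline g=\delta$ gives the pointwise identity
\[
R_g = \partial_i\partial_j g_{ij} - \Delta(\tr g) + Q(g,\partial g),
\]
where $Q$ is a pointwise expression satisfying $|Q(g,\partial g)|\le C_\ast(|g-\delta|\,|\partial^2 g|+|\partial g|^2)$ with $C_\ast$ depending on $g$ only through the pointwise decay constant $C_g$ in \eqref{pointwise}. Applying the divergence theorem on $\{r_0\le|x|\le r\}$ and sending $r\to\infty$ (the limit exists because $R_g\in L^1(\mathcal E)$ and the junk term is $L^1$ by the analysis below) yields the bulk formula
\[
2(n-1)\omega_{n-1}\,m_\mathrm{ADM}(\mathcal E,g)=\int_{|x|=r_0}(g_{ij,i}-g_{kk,j})\frac{x^j}{|x|}\,d\mu+\int_{|x|\ge r_0}\big(R_g-Q(g,\partial g)\big)\,dx,
\]
and the plan is to show that each term on the right is continuous along the sequence $g_i$.

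For the boundary term at $|x|=r_0$, the Morrey embedding $W^{2,p}_{-q}\hookrightarrow C^{1,\alpha}_{-q}$ (valid since $p>n$) gives that $\partial g_i\to\partial g$ uniformly on the compact hypersurface $\{|x|=r_0\}\subset\mathcal E$, so this term converges. The scalar curvature integral converges by hypothesis, so the real content is the convergence of $\int Q(g_i,\partial g_i)\,dx$ to $\int Q(g,\partial g)\,dx$.

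The main obstacle, then, is controlling the junk term $Q$ in $L^1(\mathcal E)$. I would use two ingredients: first, Morrey embedding gives uniform pointwise decay $|g_i-\delta|\le C|x|^{-q}$ and $|\partial g_i|\le C|x|^{-q-1}$, so the factors $(g_i-\delta)$ and $\partial g_i$ live in $L^\infty_{-q}$ and $L^\infty_{-q-1}$ respectively with uniform bounds and with $L^\infty_{-q}$, $L^\infty_{-q-1}$ convergence to their limits. Second, the products $(g_i-\delta)\,\partial^2 g_i$ and $\partial g_i\cdot\partial g_i$ land in $L^p_{-2q-2}(\mathcal E)$ via the obvious $L^\infty_s\cdot L^p_t\subset L^p_{s+t}$ inequality. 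Writing the difference
\[
(g_i-\delta)\,\partial^2 g_i-(g-\delta)\,\partial^2 g=(g_i-g)\,\partial^2 g_i+(g-\delta)\,(\partial^2 g_i-\partial^2 g)
\]
and analogously for the $|\partial g|^2$ terms, each factor in a telescoped difference converges to zero in either $L^\infty_{-q}$ or $L^p_{-q-2}$ (resp.\ $L^\infty_{-q-1}$ or $L^p_{-q-1}$), with the companion factor uniformly bounded, so the product tends to zero in $L^p_{-2q-2}$.

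To finish, I need the embedding $L^p_{-2q-2}(\mathcal E)\hookrightarrow L^1(\mathcal E)$. A direct Hölder computation shows $L^p_s(\mathcal E)\subset L^1(\mathcal E)$ precisely when $s<-n$, and the assumption $q>\frac{n-2}{2}$ gives $-2q-2<-n$ exactly on the dot. This is the borderline nature of Sobolev asymptotics for the mass: the assumption on $q$ is the weakest one for which the junk term is automatically integrable and continuous in the stated topologies. Combining the three convergences in the bulk formula yields $m_\mathrm{ADM}(\mathcal E,g_i)\to m_\mathrm{ADM}(\mathcal E,g)$.
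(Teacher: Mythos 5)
Your proof is correct, and it is essentially the standard argument: the paper gives no proof of Lemma \ref{mass_conv}, deferring to \cite[Lemma 3.35]{Lee}, and that reference proceeds exactly as you do, by rewriting the mass as a fixed inner boundary term plus the bulk integral of $R_g$ minus the quadratic remainder and checking continuity of each piece. Your weight bookkeeping ($L^\infty_{-q}\cdot L^p_{-q-2}\subset L^p_{-2q-2}\subset L^1$ precisely because $q>\tfrac{n-2}{2}$) is the right way to see why the stated hypotheses are exactly what is needed.
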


This motivates the following definition of closeness of asymptotically flat metrics. 

\begin{defn}\label{EpsClose}
Given $\ve>0$, 
we say that two $(p,q)$ Sobolev asymptotically flat metrics $g_1$ and $g_2$ on $M$ are \emph{$\ve$-close in the asymptotic topology}  if the following inequalities are satisfied:
\begin{enumerate}[(i)]
    \item $\|g_1-g_2\|_{W^{2,p}_{-q}(\mathcal E)}<\ve$,
    
    \item $\|R_{g_1}-R_{g_2}\|_{L^1(\mathcal E)}<\ve$,
    
    \item $|m_\mathrm{ADM}(\mathcal E, g_1)-m_\mathrm{ADM}(\mathcal E, g_2)|<\ve$,
    
    \item $(1-\ve) g_1\le g_2\le (1+\ve)g_1$ as bilinear forms, globally. 
\end{enumerate}
\end{defn}

To begin our construction, we define a distinguished function $\rho$ on $M$. 

\begin{lem}\label{defn_rho}
Let $(M,g,\mathcal E)$ be asymptotically flat, not assumed to be complete. There exists a $C^\infty$ proper function $\rho:M\to (0,\infty)$ which equals $|x|$ on $\mathcal E$ and for every sequence $\{p_i\}\subset M\setminus \mathcal E$ which eventually leaves every compact set, $\rho(p_i)\to 0$. We may additionally suppose that $\rho<r_0$ on $M\setminus\mathcal E$. 
\end{lem}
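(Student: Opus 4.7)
The plan is to patch $|x|$ on $\mathcal E$ with a smoothly decaying function on the core $\mathring M := M\setminus\mathcal E$, using that $\partial\mathcal E\cong S^{n-1}$ is a compact smooth hypersurface in $M$. A two-sided tubular collar of $\partial\mathcal E$ lets us extend $|x|$ smoothly across the interface, and a global smooth proper function on $M$ will drive $\rho$ to $0$ at the ``bad infinity'' of $\mathring M$ (incompleteness or unbounded escape through $\mathring M$). The required properness of $\rho:M\to(0,\infty)$ as a map (preimages of compact subsets of $(0,\infty)$ being compact in $M$) then follows automatically.

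\textbf{Construction.} Fix a compact exhaustion $\{K_j\}_{j\ge 1}$ of $M$ with $K_j\subset\Int K_{j+1}$, which exists by paracompactness, and smooth cutoffs $\chi_j:M\to[0,1]$ with $\chi_j=1$ on $K_j$ and $\chi_j=0$ off $\Int K_{j+1}$. Set
\[F(p) := \sum_{j=1}^\infty \bigl(1-\chi_j(p)\bigr).\]
For $p\in K_N$ one has $\chi_j(p)=1$ for all $j\ge N$, so the sum is locally finite and $F\in C^\infty(M)$; moreover $F(p_i)\to\infty$ for any sequence $\{p_i\}$ exiting every $K_N$. Now pick a tubular collar $\phi:\partial\mathcal E\times(-\delta,\delta)\to M$ with $\delta<r_0$, arranged so that $\phi(q,t)\in\mathcal E$ and $|\phi(q,t)|=r_0+t$ for $t\ge 0$ (via the radial flow $q\mapsto q+t(q/|q|)$ in the chart $\Phi$). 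Extend $|x|$ by defining $\tilde r(\phi(q,t)) := r_0+t$ on the collar; this agrees with $|x|$ on the overlap with $\mathcal E$ and is smooth there, and we extend $\tilde r$ arbitrarily and smoothly to all of $M$. Choose a smooth cutoff $\eta:M\to[0,1]$ equal to $1$ on $\mathcal E\cup\phi(\partial\mathcal E\times(-\delta/3,0])$ and $0$ off $\mathcal E\cup\phi(\partial\mathcal E\times(-\delta/2,0])$, and define
\[\rho := \eta\,\tilde r + (1-\eta)\,\tfrac{r_0}{2}e^{-F}.\]

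\textbf{Verification and the main obstacle.} On $\mathcal E$, $\eta\equiv 1$ and $\tilde r=|x|$, so $\rho=|x|$. On $\mathring M$, $\tilde r\in(r_0-\delta,r_0)$ wherever $\eta>0$ (which forces $t\in(-\delta/2,0)$) and $(r_0/2)e^{-F}<r_0/2$, so as a convex combination $\rho<r_0$; positivity follows from $\tilde r\ge r_0-\delta>0$ on the collar and $e^{-F}>0$ everywhere. If $\{p_i\}\subset\mathring M$ leaves every compact set of $M$, then eventually $p_i$ exits the precompact collar, so $\eta(p_i)=0$ and $F(p_i)\to\infty$, giving $\rho(p_i)=\tfrac{r_0}{2}e^{-F(p_i)}\to 0$. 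The one delicate point is $C^\infty$ smoothness across $\partial\mathcal E$: to sidestep the need for $\eta$ to meet the constant $1$ to infinite order from the core side, we simply arranged $\eta\equiv 1$ on a whole neighborhood of $\partial\mathcal E$, so that $\rho=\tilde r$ near $\partial\mathcal E$ and smoothness reduces to that of $\tilde r=r_0+t$ in collar coordinates, which is immediate.
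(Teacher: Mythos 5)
Your construction is correct and is essentially the paper's own argument written out in full detail: the paper likewise builds (via a partition of unity) an exhaustion function tending to infinity off every compact set, inverts it to obtain a positive function tending to $0$, and interpolates with $|x|$ near $\{|x|=r_0\}$. Your $F=\sum_j(1-\chi_j)$ plays the role of the paper's $\sigma$, and your collar-plus-cutoff patching is the interpolation step, so there is nothing to add.
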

\begin{proof}
By a partition of unity argument, there exists a function $\sigma:M\to [1,\infty)$ such that $\sigma\to \infty$ outside of every compact set. We then interpolate between $\sigma^{-1}$ and $|x|$ near $|x|=r_0$ to obtain the desired function $\rho$.
\end{proof}

We use $\rho$ to construct a ``compact" exhaustion of the core which avoids incomplete points: For $\sigma>0$, let $M_\sigma=\{\rho \ge \sigma\}$. For $\sigma$ a regular value of $\rho$, $\partial M_\sigma$ is a smooth hypersurface. Then $M_\sigma$ is an asymptotically flat manifold with boundary and containing no incomplete points. 

The analytical content of this section is to construct solutions to certain Schr\"odinger equations on $M_\sigma$ and let $\sigma\to 0$. This is only possible because the potentials will vanish identically away from $M_{\sigma_0}$ for some fixed $\sigma_0>0$. We will need precise a priori estimates and so work very carefully and keep track of the constants. First, we note a standard Euclidean-type Sobolev inequality on $M_\sigma$.

\begin{lem}[Sobolev inequality for $M_\sigma$]\label{sobconst}
Let $(M^n, \overline{g},\mathcal E)$ be a fixed asymptotically flat manifold, and define $M_\sigma$ as above. Let $\Lambda>0$ be a fixed constant, and assume that $g$ is another metric on $M$ such that $\Lambda^{-1}\overline g\le g\le \Lambda \overline g$. Then for all $\sigma>0$, there exists a constant $C_\sigma$  depending only on $\sigma$ and $\Lambda$, such that if $u:M_\sigma\to \Bbb R$ is a $C^1$ function for which $du\in L^2(M_\sigma)$, then $u\in L^{\frac{2n}{n-2}}(M_\sigma)$ and 
\begin{equation}
\left(\int_{M_\sigma} |u|^\frac{2n}{n-2}\,d\mu_g\right)^{\frac{n-2}{n}}\le C_\sigma\int_{M_\sigma}|du|_g^2\,d\mu_g.\end{equation}
\end{lem}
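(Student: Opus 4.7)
The plan is to reduce to the reference metric $\overline g$ via the bilipschitz bound, and then prove the Euclidean-type Sobolev inequality on $(M_\sigma,\overline g)$ by gluing together the Euclidean Sobolev inequality on the end $\mathcal E$ and the standard compact Sobolev inequality on $K:=M_\sigma\cap\{\rho\le 2r_0\}$. The reduction step is routine: $\Lambda^{-1}\overline g\le g\le\Lambda\overline g$ makes $d\mu_g$ and $|du|_g^2$ pointwise comparable to the corresponding $\overline g$ quantities with constants depending only on $n$ and $\Lambda$, so any Sobolev inequality for $\overline g$ transfers to one for $g$ at the cost of a fixed multiplicative factor.

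For $\overline g$, I would fix a smooth cutoff $\phi$ on $M$ with $\phi\equiv 1$ on $\{\rho\le\tfrac{3}{2} r_0\}$ and $\phi\equiv 0$ on $\{\rho\ge 2r_0\}$, and split $u=\phi u+(1-\phi)u$. The tail $(1-\phi)u$ vanishes on a neighborhood of $\partial\mathcal E$ in $\mathcal E$, so it extends by zero to a $W^{1,2}$ function on $\mathbb R^n$; the Euclidean Sobolev inequality and the product rule yield a bound of $\|(1-\phi)u\|_{L^{2n/(n-2)}(\mathcal E)}$ by a multiple of $\|du\|_{L^2(\mathcal E)}+\|u\|_{L^2(A)}$, where $A$ is the annular support of $\nabla\phi$. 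The head $\phi u$ is supported in the compact manifold-with-boundary $K$, where the usual compact Sobolev inequality (with constant depending only on $\sigma$ and the fixed $\overline g|_K$) gives $\|\phi u\|_{L^{2n/(n-2)}(K)}\le C_K(\|du\|_{L^2(K)}+\|u\|_{L^2(K)})$. Summing these yields the preliminary estimate
\[
\|u\|_{L^{2n/(n-2)}(M_\sigma)}\le C\bigl(\|du\|_{L^2(M_\sigma)}+\|u\|_{L^2(K)}\bigr).
\]

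The main obstacle is absorbing the nonlocal $L^2(K)$ term, since this preliminary bound is by itself too weak (a nonzero constant has vanishing gradient but infinite $L^{2n/(n-2)}$ norm on the infinite-volume $M_\sigma$). This is where the Euclidean end is essentially used. After reducing to compactly supported $u$ by a cutoff approximation $\chi_R u$, $\chi_R\equiv 1$ on $\{\rho\le R\}$ --- whose $W^{1,2}$ error $\|u\nabla\chi_R\|_{L^2}$ tends to zero by H\"older on the annulus $\{R\le|x|\le 2R\}$ combined with the $L^{2n/(n-2)}$ integrability of $u$ on $\mathcal E$ --- one writes $u(2r_0,\omega)=-\int_{2r_0}^\infty\partial_r u(r,\omega)\,dr$ along each radial direction $\omega\in S^{n-1}$ and applies Cauchy--Schwarz with the weight $r^{(n-1)/2}$ to obtain $\|u\|_{L^2(\{|x|=2r_0\})}\le C(n,r_0)\|du\|_{L^2(\mathcal E)}$. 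A standard Poincar\'e--trace inequality on $K$, with trace taken on the interior sphere $\{|x|=2r_0\}$, then upgrades this to $\|u\|_{L^2(K)}\le C_K'\|du\|_{L^2(M_\sigma)}$, which combines with the preliminary estimate to close the argument.
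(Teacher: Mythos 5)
Your architecture---bilipschitz reduction to $\overline g$, a cutoff splitting into a Euclidean tail and a compact core, the Euclidean Sobolev inequality on the tail, the compact Sobolev inequality on the core, and a radial integration from infinity to kill the lower-order term---is the standard one, and it is essentially what the paper outsources: the paper's entire proof is a citation of \cite{SY79PMT} for the fixed metric $\overline g$ plus the observation that $\Lambda^{-1}\overline g\le g\le\Lambda\overline g$ transfers the inequality at the cost of a power of $\Lambda$. So you are supplying an actual argument where the paper supplies a reference, and the skeleton is the right one.

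There is, however, a genuine circularity in your closing step. You control the cutoff error by $\|u\nabla\chi_R\|_{L^2(A_R)}\le\|\nabla\chi_R\|_{L^n(A_R)}\|u\|_{L^{2n/(n-2)}(A_R)}$ and send $R\to\infty$ using ``the $L^{2n/(n-2)}$ integrability of $u$ on $\mathcal E$''---but that integrability is precisely the conclusion of the lemma, not a hypothesis. The same issue hides in the formula $u(2r_0,\omega)=-\int_{2r_0}^\infty\partial_ru\,dr$: from $du\in L^2$ and $n\ge3$, Cauchy--Schwarz against $r^{(n-1)/2}$ only shows that $u(r,\omega)$ has \emph{some} limit as $r\to\infty$ for a.e.\ $\omega$, not that the limit is zero. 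Indeed $u\equiv1$ shows the inequality cannot follow from $du\in L^2$ alone; the remark after the lemma (``imposing $u\to0$ along $\mathcal E$ acts like a boundary condition'') signals that a vanishing condition at infinity is tacitly in force, and in every application $u$ lies in $W^{2,p}_{-q}$ with $q>0$ and so decays pointwise. You should make that hypothesis explicit, and then close the argument non-circularly: for functions whose radial limits vanish, the same radial integration (run for all $r$, i.e.\ the weighted Hardy inequality on $\{|x|\ge 2r_0\}$) gives $\bigl\||x|^{-1}u\bigr\|_{L^2(\mathcal E)}\le C\|du\|_{L^2(\mathcal E)}$, and this single estimate controls both the cutoff error, via $\|u\nabla\chi_R\|_{L^2(A_R)}\le CR^{-1}\|u\|_{L^2(A_R)}\le C\bigl\||x|^{-1}u\bigr\|_{L^2(A_R)}\to0$, and the trace and annulus terms feeding your Poincar\'e step, without ever invoking the conclusion.
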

\begin{proof}
For a fixed metric $\overline g$, this was proved in \cite{SY79PMT}, and then it is clear that one can switch from $\overline g$ to $g$ simply by multiplying $C_\sigma$ by a power of the uniform bound $\Lambda$,
\end{proof}

\begin{rk}
It is crucial for the method in this paper that this Sobolev inequality holds without a ``boundary condition" for $u$ on $\partial M_\sigma$. Indeed, imposing $u\to 0$ along $\mathcal E$ acts like a boundary condition. 
\end{rk}

We also require a scale-broken weighted elliptic estimate on $M_\sigma$.  

\begin{lem}\label{EllipticEstimate}
Let $(M^n,g,\mathcal E)$ be an asymptotically flat manifold of Sobolev type $(p,q)$ and $0<\delta<\sigma_0$. Then there exists a constant $C$, depending only on $n,p,q,\delta$, and the $C^{1,\alpha}_{-q}(M_{\sigma_0/2})$ norm of $g-\overline g$,
such that for any $u\in W^{2,p}_{-q}(M_{\sigma_0})$,
\begin{equation}\label{EE1}\|u\|_{W^{2,p}_{-q}(M_{\sigma_0})}\le C\left(\|\Delta_g u\|_{L^p_{-q-2}(M_{\sigma_0-\delta})}+\|u\|_{L^\frac{2n}{n-2}(M_{\sigma_0-\delta})}\right).\end{equation}
\end{lem}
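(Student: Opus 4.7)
The plan is a standard two-scale decomposition. I split $M_{\sigma_0}$ into an asymptotic piece $B=\{|x|\geq R\}$ and an inner compact piece $A=M_{\sigma_0}\cap\{|x|\leq 2R\}$, where $R\gg r_0$ is chosen large enough, depending on the $C^{1,\alpha}_{-q}$ norm of $g-\overline g$, that $\Delta_g-\Delta_{\overline g}$ has arbitrarily small operator norm on $B$. The $\delta$-gap between $M_{\sigma_0}$ and $M_{\sigma_0-\delta}$ plays the role of a Whitney buffer, so that the interior estimates on $A$ can be carried out with balls lying entirely inside $M_{\sigma_0-\delta}$ and never need to see $\partial M_{\sigma_0}$.

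On the end $B$, the weighted Fredholm theory for $\Delta_{\overline g}$ on $\mathbb R^n$ at the non-indicial weight $-q$ (see \cite[\S1]{BartnikMass} and \cite{LeeParker}), combined with a cutoff-and-perturbation argument, yields
\[
\|u\|_{W^{2,p}_{-q}(\{|x|\geq 2R\})} \leq C\bigl(\|\Delta_g u\|_{L^p_{-q-2}(\{|x|\geq R\})} + \|u\|_{L^p_{-q}(\{R\leq|x|\leq 2R\})}\bigr).
\]
On $A$, the standard interior Calder\'on--Zygmund $L^p$ estimate for $\Delta_g$ (whose coefficients are in $C^{0,\alpha}_{-q}$ by \eqref{pointwise}) applies with balls of uniform radius contained in $M_{\sigma_0-\delta}$, producing $\|u\|_{W^{2,p}(A)}\leq C(\|\Delta_g u\|_{L^p(K)}+\|u\|_{L^p(K)})$ for a fixed compact $K\subset M_{\sigma_0-\delta}$. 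Patching by a partition of unity and using that weighted and unweighted norms are comparable on any fixed compact set yields the preliminary estimate
\[
\|u\|_{W^{2,p}_{-q}(M_{\sigma_0})} \leq C\bigl(\|\Delta_g u\|_{L^p_{-q-2}(M_{\sigma_0-\delta})} + \|u\|_{L^p(K')}\bigr)
\]
for some enlarged compact $K'\subset M_{\sigma_0-\delta}$.

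To upgrade the $L^p(K')$ term to $L^{2n/(n-2)}(M_{\sigma_0-\delta})$, I split by regime. When $p\leq 2n/(n-2)$, a direct H\"older inequality on $K'$ does it. When $p>2n/(n-2)$, the compact embedding $W^{2,p}(K')\hookrightarrow\hookrightarrow L^p(K')$ (Rellich--Kondrachov) together with the continuous inclusion $L^p(K')\hookrightarrow L^{2n/(n-2)}(K')$ permits the Ehrling--Peter-Paul inequality
\[
\|u\|_{L^p(K')}\leq \varepsilon\|u\|_{W^{2,p}(K')} + C(\varepsilon)\|u\|_{L^{2n/(n-2)}(K')},
\]
after which the first term on the right is absorbed into the left-hand side of the preliminary estimate. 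The main technical subtlety is the weighted Fredholm step on the end, especially for weights $q\geq n-2$ that lie outside the Bartnik isomorphism range, or for the countable set of exceptional indicial weights; in both cases the scale-broken form with a compact lower-order term is precisely what survives, and so no further modification is required. The remaining work is careful bookkeeping of the constants' dependence on $\delta$ and on the $C^{1,\alpha}_{-q}$ norm of $g-\overline g$, both of which appear only through the interior estimate on $A$ and the perturbative bound on $B$.
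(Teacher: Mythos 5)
Your proof follows essentially the same route as the paper's: cut off near infinity, apply the sharp weighted estimate for the flat Laplacian together with a perturbation argument on the end (choosing the cutoff radius large so the $C^{1,\alpha}_{-q}$ smallness of $g-\overline g$ lets you absorb $\Delta_g-\Delta_{\overline g}$), use interior $L^p$ theory on the compact piece with the $\delta$-collar providing room, and then trade the residual compact lower-order term for the $L^{\frac{2n}{n-2}}$ norm, splitting at $p=\frac{2n}{n-2}$. The only cosmetic difference is in the case $p>\frac{2n}{n-2}$, where the paper passes through $L^\infty$ via De Giorgi--Nash--Moser while you use an Ehrling-type compactness interpolation; both are valid, provided you arrange (as is routine) that the compact set $K'$ on which you absorb the $\varepsilon\|u\|_{W^{2,p}(K')}$ term is controlled by the left-hand side.
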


We emphasize that this is an ``interior-type" estimate since $\overline M_{\sigma_0}\subset M_{\sigma_0-\delta}$. We also take this opportunity to define a cutoff function that will be used later as well. Let $\chi(|x|)$ be a radial cutoff in $\Bbb R^n$ which is one on the ball $B_1$ and zero outside the ball $B_2$. For $\lambda\ge r_0$ we define $\chi_\lambda$ on $M$ by setting it equal to $\chi(|x|/\lambda)$ on $\mathcal E$ and extending to the rest of the manifold by one. 

\begin{proof}[Proof of Lemma \ref{EllipticEstimate}]
Let $\lambda\ge r_0$ and set $u_0=\chi_\lambda u$, $u_1=(1-\chi_\lambda)u$, so that $u=u_0+u_1$. Then $u_1\in W^{2,p}_{-q}(\Bbb R^n)$, so we have the sharp estimate \cite[Theorem 0]{McO79}
\[\|u_1\|_{W^{2,p}_{-q}(\Bbb R^n)}\le C\|\Delta_{\overline g} u_1\|_{L^p_{-q-2}(\Bbb R^n)}.\]
Since $\Delta_g-\Delta_{\overline g}$ is the zero operator for $|x|\le \lambda$, we then have
\begin{align*}
    \|\Delta_{\overline g} u_1\|_{L^p_{-q-2}(\Bbb R^n)}&\le \|(\Delta_g-\Delta_{\overline g}) u_1\|_{L^p_{-q-2}(\Bbb R^n)}+\|\Delta_{g} u_1\|_{L^p_{-q-2}(\Bbb R^n)} \\
    &\le C\lambda^{-q} \|u_1\|_{W^{2,p}_{-q}(\Bbb R^n)}+\|\Delta_{g} u_1\|_{L^p_{-q-2}(\Bbb R^n)}.
\end{align*}
Choosing $\lambda$ large enough (depending only on the quantities listed in the statement of the lemma), we have 
\[\|u_1\|_{W^{2,p}_{-q}(\Bbb R^n)} \le C\|\Delta_{g} u_1\|_{L^p_{-q-2}(\Bbb R^n)}.\] 
Inserting the definition of $u_1$ into the right-hand side and carrying out the differentiations, we obtain error terms over the fixed compact set $K=\spt \nabla\chi_\lambda$. Lower order terms are moved to the left-hand side by interpolation, leaving us with 
\[\|u_1\|_{W^{2,p}_{-q}(\Bbb R^n)}\le C \left(\|\Delta_g u\|_{L^p_{-q-2}(M)}+\|u\|_{L^p(K)}\right).\] If $p\le \frac{2n}{n-2}$ (only possible when $n=3$ since $p>n$), we use H\"older's inequality to estimate $\|u\|_{L^p(K)}\le C\|u\|_{L^\frac{2n}{n-2}(K)}$.
If $p> \frac{2n}{n-2}$, we trivially estimate $\|u\|_{L^p(K)}\le C\|u\|_{L^\infty}$ and use the De Giorgi--Nash--Moser theorem to estimate 
\[\|u\|_{L^\infty(K)}\le C\left(\|\Delta_g u\|_{L^p(K')}+\|u\|_{L^\frac{2n}{n-2}(K')}\right),\]
where $K'$ is a slightly larger compact set containing $K$. The unweighted $L^p$ norm on the right-hand side can be absorbed into a weighted $L^p$ norm. Altogether, we have proved
\begin{equation}\label{211a}
    \|u_1\|_{W^{2,p}_{-q}(\Bbb R^n)}\le C \left(\|\Delta_g u\|_{L^p_{-q-2}(M)}+\|u\|_{L^\frac{2n}{n-2}(K)}\right). 
\end{equation}

The estimate
\begin{equation}\label{211b}
    \|u_0\|_{W^{2,p}_{-q}(M_{\sigma_0})}\le C \left(\|\Delta_g u\|_{L^p_{-q-2}(M)}+\|u\|_{L^\frac{2n}{n-2}(M_{\sigma_0-\delta})}\right). 
\end{equation}
for $u_0$ is much simpler since it vanishes on $\mathcal E$. It may be achieved by applying the $L^p$ theory for elliptic equations on the compact set $M_{\sigma_0-\delta}\setminus\{|x|\ge \lambda\}$. Combining \eqref{211a} and \eqref{211b} gives \eqref{EE1}, as desired. 
\end{proof}

The following is the main result of this section and can be compared to \cite[Lemma 3.2]{SY79PMT}.

\begin{prop}\label{Existence} Let $(M^n,g,\mathcal E)$ be a $(p,q)$ asymptotically flat manifold with $p>n$, $q> \frac{n-2}{2}$, and $\sigma_0>0$.  Let $V$ be a smooth function on $M$, and let $V^-$ denote its negative part. Assume that $\spt(V)\subset M_{\sigma_0}$ and $\|V^-\|_{L^\frac n2(M_{\sigma_0})}<C_{\sigma_0}^{-1}$, where $C_{\sigma_0}$ is a constant as in Lemma~\ref{sobconst} that works for this specific $g$).
 Then the Neumann problem
\[(-\Delta_g+V,\nu_g|_{\partial M_\sigma}): W^{2,p}_{-q}(M_\sigma)\to L^p_{-q-2}(M_\sigma)\times W^{2-\frac 1p,p}(\partial M_\sigma)\]
is an isomorphism for any $\sigma\in (0,\sigma_0)$ a regular value of $\rho$. 

There exist constants $\ve_0>0$ and $C$, depending only on $n,p,q,$
and the $C^{1,\alpha}_{-q}(M_{\sigma_0/2})$ norm of $g-\overline g$, such that if $\sigma\in(0,\frac{\sigma_0}{2})$, $f\in L^p_{-q-2}(M_{\sigma_0})\cap L^\frac{2n}{n+2}(M_{\sigma_0})$ is also supported in $M_{\sigma_0}$, 
$\|V^-\|_{L^\frac n2(M_{\sigma_0})}+\|V\|_{L^p_{-q-2}(M_{\sigma_0})}<\ve_0$, 
and $u\in W^{2,p}_{-q}(M_\sigma)$ solves 
\begin{align}
 \label{2a}   -\Delta_g u + Vu &=f \quad\text{in }M_\sigma\\
 \label{2b}  \nu_g(u) &=0 \quad\text{on }\partial M_\sigma,
\end{align}
then 
\begin{equation}\label{2d}\sup_{M_{\sigma}}|u|+\|u\|_{W^{2,p}_{-q}(M_{\sigma_0})}\le C\left(\|f\|_{L^p_{-q-2}(M_{\sigma_0})}+\|f\|_{L^\frac{2n}{n+2}(M_{\sigma_0})}\right).\end{equation}
\end{prop}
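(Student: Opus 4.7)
The proof has two parts: establishing the isomorphism, and deriving the uniform estimate \eqref{2d}. The key structural observation is that $V$ and $f$ are supported in the $\sigma$-independent set $M_{\sigma_0}$, so all nontrivial analysis localizes there; on the annulus $M_\sigma\setminus M_{\sigma_0}$ the equation becomes homogeneous with Neumann data on $\partial M_\sigma$ and is handled by the maximum principle.

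For the isomorphism, standard weighted elliptic theory (in the spirit of \cite{McO79}) combined with Lemma \ref{EllipticEstimate} gives that $(-\Delta_g,\nu_g|_{\partial M_\sigma})$ is itself an isomorphism between the stated spaces. Since $V$ is smooth and compactly supported in $M_{\sigma_0}$, multiplication by $V$ is a compact operator $W^{2,p}_{-q}(M_\sigma)\to L^p_{-q-2}(M_\sigma)$ by Rellich, so $(-\Delta_g+V,\nu_g|_{\partial M_\sigma})$ is Fredholm of index zero; it suffices to prove injectivity. If $(-\Delta_g+V)u=0$ with $\nu_g(u)=0$ on $\partial M_\sigma$, pairing with $u$ and integrating by parts gives
\[\int_{M_\sigma}\bigl(|du|_g^2+Vu^2\bigr)\,d\mu_g=0,\]
where the boundary term at $\partial M_\sigma$ vanishes by the Neumann condition and the term at infinity vanishes because $q>\tfrac{n-2}{2}$ forces $u\,\partial_\nu u=o(|x|^{1-n})$ on large spheres. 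Since $V$ vanishes outside $M_{\sigma_0}$, applying Lemma \ref{sobconst} on $M_{\sigma_0}$ to $u|_{M_{\sigma_0}}$ gives $\int V^-u^2\le \|V^-\|_{L^{n/2}(M_{\sigma_0})}\,C_{\sigma_0}\int_{M_\sigma}|du|_g^2$, and the hypothesis $\|V^-\|_{L^{n/2}}<C_{\sigma_0}^{-1}$ forces $du\equiv 0$, hence $u\equiv 0$ by the decay at infinity built into $W^{2,p}_{-q}$.

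For \eqref{2d}, multiplying \eqref{2a} by $u$ and integrating as above, together with Hölder's inequality $\int fu\le\|f\|_{L^{2n/(n+2)}}\|u\|_{L^{2n/(n-2)}(M_{\sigma_0})}$ and Lemma \ref{sobconst} on the slight enlargement $M_{\sigma_0-\delta}$ (for some fixed small $\delta$), yields $\|u\|_{L^{2n/(n-2)}(M_{\sigma_0-\delta})}\le C\|f\|_{L^{2n/(n+2)}}$ with $C$ depending only on $\sigma_0$ and $\delta$ (after a further smallness condition on $\ve_0$ to absorb the $Vu^2$ term). Next apply Lemma \ref{EllipticEstimate} to $-\Delta_g u=f-Vu$ and use the Morrey embedding $W^{2,p}_{-q}\hookrightarrow L^\infty$ to estimate
\[\|Vu\|_{L^p_{-q-2}(M_{\sigma_0})}\le\|V\|_{L^p_{-q-2}(M_{\sigma_0})}\|u\|_{L^\infty(M_{\sigma_0})}\le C\ve_0\,\|u\|_{W^{2,p}_{-q}(M_{\sigma_0})};\]
choosing $\ve_0$ small enough to absorb this term produces
\[\|u\|_{W^{2,p}_{-q}(M_{\sigma_0})}\le C\bigl(\|f\|_{L^p_{-q-2}}+\|f\|_{L^{2n/(n+2)}}\bigr).\]
Morrey then controls $\|u\|_{L^\infty(M_{\sigma_0})}$ by the right-hand side. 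Finally, on the annulus $M_\sigma\setminus M_{\sigma_0}$ the PDE degenerates to $-\Delta_g u=0$ with Neumann data $\nu_g(u)=0$ on $\partial M_\sigma$, so the strong maximum principle together with Hopf's boundary point lemma gives $\sup_{M_\sigma\setminus M_{\sigma_0}}|u|\le\sup_{\partial M_{\sigma_0}}|u|\le\|u\|_{L^\infty(M_{\sigma_0})}$, which establishes \eqref{2d}. The principal obstacle throughout is keeping constants independent of $\sigma$ as $\sigma\to 0$; this succeeds precisely because the support structure of $V$ and $f$ allows every quantitative step to be performed via Lemmas \ref{sobconst} and \ref{EllipticEstimate} on the fixed domain $M_{\sigma_0}$ (or $M_{\sigma_0-\delta}$), or via the maximum principle on the complementary annulus—neither introducing any $\sigma$-dependence.
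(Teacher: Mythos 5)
Your proposal is correct and follows essentially the same route as the paper: kernel-triviality via the Sobolev inequality of Lemma \ref{sobconst} plus integration by parts and the smallness of $\|V^-\|_{L^{n/2}}$, then the a priori bound by combining an energy/Sobolev estimate for $\|u\|_{L^{2n/(n-2)}}$ on a fixed enlargement of $M_{\sigma_0}$, the scale-broken estimate of Lemma \ref{EllipticEstimate} with absorption of the $Vu$ term, and the Hopf/maximum principle on the harmonic annulus $M_\sigma\setminus M_{\sigma_0}$ to remove all $\sigma$-dependence. The only cosmetic difference is that you deduce surjectivity from a compact-perturbation/Fredholm index argument, whereas the paper appeals to self-adjointness of the Neumann problem; both are standard and equivalent here.
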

In practice, we will only apply this lemma in the situation where $f=-V$. 

\begin{rk}
To get a sense for how we will use this proposition to prove the density theorem, see the beginning of Section~\ref{sec:density}.
Our basic observation is that the equations to be solved have ``small" potentials which are identically zero away from $\mathcal E$. (See~\eqref{mainPDE1} below.) The control over $\sup_{M_{\sigma}}|u|$, rather than simply $\sup_{M_{\sigma_0}}|u|$ in Proposition~\ref{Existence} comes from the maximum principle.
This addresses the essential issue of completeness because we can make this as small as we like. Eichmair used a similar observation to prove a version of the positive mass theorem for manifolds with cylindrical ends \cite{E13}.
\end{rk}

\begin{proof}[Proof of Proposition \ref{Existence}]
Since the problem is self-adjoint, we only need to show that the operator has no kernel. Suppose $w\in W^{2,p}_{-q}(M_\sigma)$ satisfies 
\begin{align*}
    -\Delta_g w + Vw&=0 \quad\text{in }M_\sigma\\
   \nu_g(w) &=0 \quad\text{on }\partial M_\sigma.
\end{align*}
Invoking Lemma~\ref{sobconst} and then integrating by parts (which we justify in Lemma \ref{intbyparts} below), we have
\begin{align*}
  \left(\int_{M_{\sigma_0}} |w|^\frac{2n}{n-2} \,d\mu_g\right)^\frac{n-2}{n}  &\le C_{\sigma_0}\int_{M_{\sigma}}|dw|^2_g\,d\mu_g\\
   &=C_{\sigma_0}\int_{M_\sigma} -(\Delta_g w)w\,d\mu_g\\
    &=C_{\sigma_0}\int_{M_\sigma} -Vw^2\,d\mu_g\\
    &\le C_{\sigma_0}\int_{M_{\sigma_0}} V^- w^2\,d\mu_g\\
    &\le C_{\sigma_0}\left(\int_{M_{\sigma_0}}|V^-|^\frac{n}{2}\,d\mu_g\right)^\frac{2}{n}\left(\int_{M_{\sigma_0}}w^\frac{2n}{n-2}\,d\mu_g\right)^\frac{n-2}{n}.
\end{align*}
Our hypothesis that $\|V^-\|_{L^\frac n2(M_{\sigma_0})}<C_{\sigma_0}^{-1}$ implies that $w$ vanishes on~$M_{\sigma_0}$. But $w$ is harmonic on a neighborhood of $M_{\sigma}\setminus M_{\sigma_0}$ in $M_\sigma$, so it must vanish on  all of~$M_\sigma$. 

Now suppose $u$ satisfies \eqref{2a} and \eqref{2b}. 
To prove \eqref{2d}, we first note that since $u$ is harmonic on $M_\sigma\setminus M_{\sigma_0}$ and satisfies a Neumann condition on $\partial M_\sigma$, the Hopf lemma implies 
\[\sup_{M_\sigma}|u|=\sup_{M_{\sigma_0}}|u|,\]
and by the weighted Morrey inequality,
\[\sup_{M_{\sigma_0}}|u|\le  C\|u\|_{W^{1,p}_{-q}(M_{\sigma_0})}\le C\|u\|_{W^{2,p}_{-q}(M_{\sigma_0})},\]
where $C$ depends only on $\sigma_0$. We now apply the interior estimate \eqref{EE1} with $\delta=\sigma_0/2$ and note that $f$ and $V$ are supported in $M_{\sigma_0}$ to obtain 
\begin{align*}\sup_{M_\sigma}|u| + \|u\|_{W^{2,p}_{-q}(M_{\sigma_0})}
&\le C\left(\|Vu\|_{L^p_{-q-2}(M_{\sigma_0/2})}+\|f\|_{L^p_{-q-2}(M_{\sigma_0/2})}+\|u\|_{L^\frac{2n}{n-2}(M_{\sigma_0/2})}\right)\\
&\le C\left(\|V\|_{L^p_{-q-2}(M_{\sigma_0})}\sup_{M_{\sigma}}|u|+\|f\|_{L^p_{-q-2}(M_{\sigma_0})}+\|u\|_{L^\frac{2n}{n-2}(M_{\sigma_0/2})}\right).\end{align*}
By choosing $\ve_0$ small enough, $\|V\|_{L^p_{-q-2}}(M_{\sigma_0})$ will be small enough so that we have
\begin{equation}
    \sup_{M_\sigma}|u| + \|u\|_{W^{2,p}_{-q}(M_{\sigma_0})}
    \le 2C\left(\|f\|_{L^p_{-q-2}(M_{\sigma_0})}+\|u\|_{L^\frac{2n}{n-2}(M_{\sigma_0/2})}\right).
\end{equation}

It only remains to estimate $\|u\|_{L^\frac{2n}{n-2}(M_{\sigma_0/2})}$. Using the Sobolev inequality and integrating by parts as we did for $w$ above, 
\begin{align*}
  C_{\sigma_0/2}^{-1}\left(\int_{M_{\sigma_0/2}} |u|^\frac{2n}{n-2} \,d\mu_g\right)^\frac{n-2}{n}  &\le 
    \int_{M_{\sigma_0/2}} (f-Vu)u\,d\mu_g\\
    &\le \left(\int_{M_{\sigma_0/2}} |f|^\frac{2n}{n+2}\,d\mu_g\right)^\frac{n+2}{2n}\left(\int_{M_{\sigma_0/2}}|u|^\frac{2n}{n-2}\,d\mu_g\right)^\frac{n-2}{2n}\\
    &\quad\quad +\left(\int_{M_{\sigma_0/2}} |V^-|^\frac{n}{2}\,d\mu_g\right)^\frac{2}{n}\left(\int_{M_{\sigma_0/2}}|u|^\frac{2n}{n-2}\,d\mu_g\right)^\frac{n-2}{n}.
\end{align*}
\[C_{\sigma_0/2}^{-1}\|u\|_{L^\frac{2n}{n-2}(M_{\sigma_0/2})}^2\le \|f\|_{L^\frac{2n}{n+2}(M_{\sigma_0})}\|u\|_{L^\frac{2n}{n-2}(M_{\sigma_0/2})}
+\|V^-\|_{L^\frac{n}{2}(M_{\sigma_0})}\|u\|_{L^\frac{2n}{n-2}(M_{\sigma_0/2})}^2.\]

So long as  $\varepsilon_0<\tfrac{1}{2}C_{\sigma_0/2}^{-1}$, we can absorb the $V^-$ term to obtain
\begin{equation}\|u\|_{L^\frac{2n}{n-2}(M_{\sigma_0/2})}\le 2C_{\sigma_0/2}\|f\|_{L^\frac{2n}{n+2}(M_{\sigma_0})} \end{equation}
and the result follows.
\end{proof}

\begin{lem}\label{intbyparts}
Let $w\in W^{2,p}_{-q}(M_\sigma)$ for $p>n$. Then $dw\in L^2(M_\sigma)$ and if $\nu_g(w)=0$ on $\partial M_\sigma$, then 
\[\int_{M_\sigma}(-\Delta_g w)w\,d\mu_g=\int_{M_\sigma}|dw|^2_g\,d\mu_g.\]
\end{lem}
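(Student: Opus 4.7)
The plan is to establish the identity by a cutoff-and-pass-to-the-limit argument using the smooth cutoff $\chi_\lambda$ introduced above, which equals $1$ on $\{|x|\le\lambda\}\cup(M\setminus\mathcal E)$ and $0$ on $\{|x|\ge 2\lambda\}$. Since $w\chi_\lambda$ has bounded support within $M_\sigma$, the only interior boundary contribution is $\partial M_\sigma$ (the $\{|x|=2\lambda\}$ piece is killed by $\chi_\lambda$). On this bounded region the standard divergence theorem for $W^{2,p}$ functions gives
\[\int_{M_\sigma}(-\Delta_g w)w\chi_\lambda\,d\mu_g = \int_{M_\sigma}\chi_\lambda|dw|_g^2\,d\mu_g + \int_{M_\sigma} w\langle dw,d\chi_\lambda\rangle_g\,d\mu_g - \int_{\partial M_\sigma}\nu_g(w)w\chi_\lambda\,d\sigma_g,\]
and the boundary integral vanishes by hypothesis (since $p>n$, $\nabla w\in C^{0,\alpha}_{\mathrm{loc}}$, so $\nu_g(w)$ is continuous up to $\partial M_\sigma$). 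It then suffices to prove (a) $dw\in L^2(M_\sigma)$, (b) $w\Delta_g w\in L^1(M_\sigma)$, and (c) the cross term $\int w\langle dw,d\chi_\lambda\rangle_g\,d\mu_g$ vanishes as $\lambda\to\infty$; combined with dominated convergence this yields the claim.

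For (a) and (b), I would use the weighted Morrey embedding $W^{2,p}_{-q}(\mathcal E)\hookrightarrow C^{1,\alpha}_{-q}(\mathcal E)$ (valid since $p>n$) to get the pointwise bounds $|w|\le C|x|^{-q}$ and $|dw|\le C|x|^{-q-1}$ on $\mathcal E$; on the precompact core part of $M_\sigma$ all relevant quantities lie in $L^p_{\mathrm{loc}}\subset L^2_{\mathrm{loc}}$. Since $q>(n-2)/2$ implies $2(q+1)>n$, we have $\int_\mathcal E |dw|^2\,dx\lesssim\int_{r_0}^\infty r^{n-3-2q}\,dr<\infty$, proving (a). For (b), a three-factor H\"older estimate bounding
\[|w\,\Delta_g w|\le (|x|^{q}|w|)\cdot(|x|^{q+2-n/p}|\Delta_g w|)\cdot|x|^{-2q-2+n/p}\]
with exponents $(\infty,p,p')$ shows finiteness of the weighted tail exactly when $(-2q-2+n/p)p'<-n$, i.e.\ $2q+2>n$, again supplied by $q>(n-2)/2$.

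For (c), on the annulus $A_\lambda=\{\lambda\le|x|\le 2\lambda\}$ we have $|d\chi_\lambda|\le C/\lambda$, so Cauchy--Schwarz gives
\[\Big|\int_{M_\sigma} w\langle dw,d\chi_\lambda\rangle_g\,d\mu_g\Big|\le \frac{C}{\lambda}\Big(\int_{A_\lambda}|w|^2\,dx\Big)^{1/2}\Big(\int_{A_\lambda}|dw|^2\,dx\Big)^{1/2}.\]
The pointwise bound $|w|\le C|x|^{-q}$ gives $\int_{A_\lambda}|w|^2\,dx\lesssim \lambda^{n-2q}$, so the prefactor is $\lesssim\lambda^{(n-2)/2-q}\to 0$ since $q>(n-2)/2$, while the remaining factor is bounded by $\|dw\|_{L^2(\mathcal E)}$ from (a). The whole expression therefore vanishes in the limit $\lambda\to\infty$. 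The only technical obstacle is the integrability of the cross term, which is entirely governed by the assumption $q>(n-2)/2$ (matching the classical threshold for the ADM mass to be well-defined); otherwise the argument is a routine weighted-Sobolev exercise.
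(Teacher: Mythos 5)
Your proof is correct and is essentially the paper's argument: both truncate the end and show the far-field contribution vanishes because $q>\tfrac{n-2}{2}$ forces the exponent $n-2-2q$ to be negative (the paper uses a sharp cutoff at $\{|x|=r\}$ and bounds the resulting sphere term $\int_{|x|=r}\nu_g(w)w = O(r^{n-2-2q})$, while you use a smooth cutoff and bound the annulus cross term — a cosmetic difference). Your explicit verification that $w\,\Delta_g w\in L^1$ is a detail the paper subsumes under ``both sides of the equality are defined,'' but the substance is the same.
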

\begin{proof}
By Morrey's inequality, $w\in C^1_{-q}$ so both sides of the equality are defined. Furthermore, integrating by parts on the compact domain $\{\sigma\le \rho\le r\}$, we pick up a boundary term 
\[\int_{|x|=r}\nu_g(w)w\,d\mu_{S_r, g}.\]
By inspection the integrand is $O(r^{-2q-1})$, so it must disappear in the limit since $q>\frac{n-2}{2}$. 
\end{proof}

Because we will use them many times, we record some basic facts about conformal metrics constructed using the previous proposition. 

\begin{prop}\label{ConfFactConst}
Let $(M^n,g,\mathcal E)$ be a $(p,q)$ asymptotically flat manifold with $p>n$, $q> \frac{n-2}{2}$, and $\sigma_0>0$.  Let $V$ be a smooth integrable function on $M$ that is compactly supported in $M_{\sigma_0}$.  There exists a constant $\ve_0>0$, depending only on $n,p,q,$
and the $C^{1,\alpha}_{-q}(M_{\sigma_0/2})$ norm of $g-\overline g$, such that if
\[ \|V^-\|_{L^\frac n2(M_{\sigma_0})}+\|V\|_{L^p_{-q-2}(M_{\sigma_0})}+\|V\|_{L^\frac{2n}{n+2}(M_{\sigma_0})} <\ve_0,\]
then there exists a globally defined function $u$ on $M$ such that 
\[
 -a\Delta_g u + Vu  =0,     
\]
everywhere, where $a=4\frac{n-1}{n-2}$, such that $u-1\in W^{2,p}_{-q}(M_{\sigma_0})$. Moreover, $u$ has positive upper and lower bounds,
 and we can define the metric $\tilde g= u^\frac{4}{n-2}g$. This metric  $\tilde g$ is asymptotically flat of Sobolev type $(p,q)$, with scalar curvature
\begin{equation}
    R_{\tilde g}=(R_g-V)u^{-\frac{4}{n-2}}\label{conformalformula}
\end{equation}
and ADM mass
\begin{equation}
    m_\mathrm{ADM}(\mathcal E, \tilde g)=    m_\mathrm{ADM}( \mathcal E,g)-\frac{1}{2(n-1)\omega_{n-1}}\int_M Vu\,d\mu_g.\label{massformula}
\end{equation}
\end{prop}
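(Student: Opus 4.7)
The plan is to set $u = 1 + v$ and solve the inhomogeneous problem $-a \Delta_g v + V v = -V$ on all of $M$ by taking a limit of Neumann solutions on the exhaustion $\{M_\sigma\}$. Dividing through by $a = 4(n-1)/(n-2)$ casts the equation into the form handled by Proposition~\ref{Existence}, with potential $V/a$ and source $f = -V/a$; by shrinking $\varepsilon_0$ to absorb the factor of $1/a$, for every regular value $\sigma \in (0, \sigma_0/2)$ of $\rho$ we obtain a unique $v_\sigma \in W^{2,p}_{-q}(M_\sigma)$ solving
\[
-a\Delta_g v_\sigma + V v_\sigma = -V \ \text{in}\ M_\sigma, \qquad \nu_g(v_\sigma) = 0 \ \text{on}\ \partial M_\sigma,
\]
together with the uniform estimate
\[
\sup_{M_\sigma} |v_\sigma| + \|v_\sigma\|_{W^{2,p}_{-q}(M_{\sigma_0})} \le C \bigl( \|V\|_{L^p_{-q-2}(M_{\sigma_0})} + \|V\|_{L^{2n/(n+2)}(M_{\sigma_0})} \bigr),
\]
with a constant $C$ independent of $\sigma$. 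Shrinking $\varepsilon_0$ once more we may arrange $\sup_{M_\sigma}|v_\sigma| < 1/2$.

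Next I would pass to the limit along a decreasing sequence $\sigma_i \to 0$. Outside $\spt V$ the functions $v_{\sigma_i}$ are $g$-harmonic and uniformly bounded in $L^\infty$, so interior Schauder estimates give uniform $C^{2,\alpha}_{\loc}$ control on any compact subset of $M \setminus \spt V$; on $M_{\sigma_0}$ the uniform $W^{2,p}_{-q}$ bound combined with Morrey embedding does the same. Arzelà--Ascoli together with weak compactness then extract a smooth limit $v$ defined on all of $M$, solving $-a\Delta_g v + V v = -V$ classically, with $v \in W^{2,p}_{-q}(M_{\sigma_0})$ and $\sup_M |v| < 1/2$. Setting $u = 1 + v$ produces the desired global solution of $-a \Delta_g u + V u = 0$ with $1/2 \le u \le 3/2$ and $u - 1 \in W^{2,p}_{-q}(M_{\sigma_0})$.

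The remaining statements are essentially routine. Since $u - 1 \in W^{2,p}_{-q}(\mathcal E)$ and $u$ is uniformly bounded above and below, the conformal metric $\tilde g = u^{4/(n-2)} g$ is again $(p,q)$ Sobolev asymptotically flat. The scalar curvature formula \eqref{conformalformula} follows by substituting $-a\Delta_g u = -Vu$ into the standard conformal identity $R_{\tilde g} = u^{-(n+2)/(n-2)}(-a\Delta_g u + R_g u)$. For the mass formula I would integrate the equation for $v_\sigma$ over $M_\sigma \cap \{|x| \le R\}$: the Neumann condition kills the inner boundary contribution in the divergence theorem, giving
\[
\int_{|x|=R} g(\nabla v_\sigma, \nu_g) \, d\sigma_g = \frac{1}{a} \int_{M_\sigma \cap \{|x| \le R\}} V u_\sigma \, d\mu_g.
\]
Letting first $\sigma_i \to 0$ (using local convergence and compact support of $V$) and then $R \to \infty$, and combining with the standard conformal change-of-mass identity $m_\mathrm{ADM}(\mathcal E, \tilde g) - m_\mathrm{ADM}(\mathcal E, g) = -\frac{2}{(n-2)\omega_{n-1}} \lim_{R \to \infty} \int_{|x|=R} \partial_\nu u \, d\sigma_{\overline g}$, yields \eqref{massformula} after the constants simplify via $a = 4(n-1)/(n-2)$.

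The main obstacle is the limiting argument near the incomplete locus: one cannot directly integrate across the points where $\rho \to 0$. This is resolved by performing every global computation on the truncated domain $M_\sigma$ \emph{first}, where the Neumann condition makes the inner boundary contribution vanish identically, and only passing to $\sigma \to 0$ afterward. The uniform estimates from Proposition~\ref{Existence}, together with the fact that $V$ (and hence $\Delta_g u$) is compactly supported away from the singular region, make this extraction of a global limit possible.
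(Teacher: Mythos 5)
Your proposal is correct and follows essentially the same route as the paper: solve the Neumann problem on the exhaustion $M_\sigma$ via Proposition~\ref{Existence} with source $-V$, use the uniform estimate \eqref{2d} plus local elliptic theory to extract a global limit with $\tfrac12\le u\le\tfrac32$, and derive \eqref{massformula} by integrating by parts on $M_\sigma$, where the Neumann condition annihilates the inner boundary term, before passing to the limit. The only (harmless) differences are cosmetic: you track the factor of $a$ explicitly when invoking Proposition~\ref{Existence}, and you take the limits $\sigma\to0$ and $R\to\infty$ in the opposite order from the paper, which instead computes $m_\mathrm{ADM}(\mathcal E,\tilde g_\sigma)$ for each fixed $\sigma$ and then appeals to Lemma~\ref{mass_conv}.
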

\begin{proof}
We first invoke  Proposition \ref{Existence} with $f=-V$ to see that for any $\sigma\in(0,\sigma_0)$ that is a regular value of $\rho$, there exists solution $u_\sigma$ to the problem 
\begin{align}
 \label{2a-new}   -a\Delta_g u_\sigma + Vu_\sigma  &=0 \quad\text{in }M_\sigma\\
 \label{2b-new}  \nu_g(u_\sigma) &=0 \quad\text{on }\partial M_\sigma,\\
u_\sigma-1 &\in W^{2,p}_{-q}(M_\sigma)
\end{align}
Using the global estimates \eqref{2d} together with local elliptic theory, it follows that for some sequence of $\sigma$'s converging zero, the $u_\sigma$'s converge locally in $W^{2,p}$ to some globally define function $u$. By \eqref{2d} and smallness of $\ve_0$, we can ensure that $u$ has a positive upper and lower bound. (In fact, we can choose $\tfrac{1}{2}<u<\tfrac{3}{2}$.)
 The formula \eqref{conformalformula} follows from the standard formula for scalar curvature of a conformal metric. 

It is a standard fact that $(1+v)^\frac{4}{n-2}-1\in W^{2,p}_{-q}$ if $v\in W^{2,p}_{-q}$ \cite[Lemma 2.2(i)]{Kuwert}. We claim that $(u^\frac{4}{n-2}-1)g_{ij}\in W^{2,p}_{-q}$, for then
\[\tilde g_{ij}-\delta_{ij}=(u^\frac{4}{n-2}g_{ij}-1)g_{ij}+(g_{ij}-\delta_{ij})\in W^{2,p}_{-q}.\]
It is an easy matter to check that the claim is true, and thus $\tilde{g}$ is  asymptotically flat of Sobolev type $(p,q)$.

Since $V$ is integrable, \eqref{conformalformula} implies that $R_{\tilde g}$ is as well. To compute the mass of $\tilde g$, we compute the masses of the metrics $\tilde g_{\sigma}= u_\sigma^\frac{4}{n-2}g$ on $M_\sigma$. A standard computation shows that
\begin{align*}
    m_\mathrm{ADM}(\mathcal E, \tilde g_{\sigma})
    -  m_\mathrm{ADM}( \mathcal E, g)
    &=\lim_{r\to\infty}\frac{-2}{(n-2)\omega_{n-1}} \int_{|x|=r} \nu_g(u_{\sigma})\, d\mu_{S_r, g} \\
    &=\frac{-2}{(n-2)\omega_{n-1}} \int_{M_\sigma} \Delta_g u_{\sigma}\, d\mu_{g} \\
    &=\frac{-1}{2(n-1)\omega_{n-1}}\int_M V u_{\sigma}\,d\mu_g,
\end{align*}   
where the inner boundary term of the integration by parts vanishes due to the Neumann condition for $u_\sigma$.
The formula \eqref{massformula} now follows because $m_\mathrm{ADM}(\mathcal E, \tilde g_\sigma)\to m_\mathrm{ADM}(\mathcal E, \tilde g)$ by Lemma \ref{mass_conv}, and the corresponding integrals obviously as well since $u_\sigma \to u $ uniformly on compact sets.  \end{proof}

\section{Proof of the density theorem, Theorem \ref{DensityTheorem}}\label{sec:density}

Let $(M^n,g, \mathcal E)$, $(p,q)$, $\rho$, $\ve$, and $K$ be as in the statement of Theorem \ref{DensityTheorem} and Section \ref{preliminaries}.  Let $\chi_\lambda(x)=\chi(x/\lambda)$ be the family of cutoff functions defined below Lemma \ref{EllipticEstimate}. Define $g_\lambda=\chi_\lambda g+(1-\chi_\lambda)\overline g$, where $\overline g$ is the background flat metric on $\mathcal E$. We may take $\sigma_0$ to be any positive regular value of $\rho$ such that $K\subset M_{\sigma_0}$. 

 For $0<\sigma<\sigma_0$ a regular value of $\rho$ we consider the conformal Laplace-type equation 
\begin{align}
    -a\Delta_\lambda u_{\lambda,\sigma}  + (R_\lambda -\chi_\lambda R_g)u_{\lambda,\sigma} &=0 \quad\text{in }M_\sigma,\label{mainPDE1}\\
    \nu_g (u_{\lambda,\sigma} )&=0 \quad\text{on }\partial M_\sigma,\nonumber\\
   u_{\lambda,\sigma}&\to 1\quad\text{on $\mathcal E$},\nonumber
\end{align}
where $u_{\lambda,\sigma}:M_\sigma\to\Bbb R$ and $\Delta_\lambda$ and $R_\lambda$ refer to the Laplacian and scalar curvature of the metric $g_\lambda$, respectively. 
Setting $v_{\lambda,\sigma}=u_{\lambda,\sigma}-1$, we therefore solve
\begin{align}
    -a\Delta_\lambda v_{\lambda,\sigma}  + (R_\lambda -\chi_\lambda R_g) v_{\lambda,\sigma} &=- (R_\lambda -\chi_\lambda R_g) \quad\text{in }M_\sigma,\label{mainPDE2}\\
   \nu_g (v_{\lambda,\sigma}) &=0 \quad\text{on }\partial M_\sigma,\nonumber\\
   v_{\lambda,\sigma}&\in W^{2,p}_{-q'}(M_\sigma)\nonumber
\end{align}
as in Propositions \ref{Existence} and \ref{ConfFactConst}, where $q'\in(\frac{2n}{n-2},q)$. (The reason for the change from $q$ to $q'$ will become apparent in the proof.) To verify the hypotheses of these propositions, we first require a basic integration lemma. 

\begin{lem} \label{dyadic}
Let $f\in L^p_{-q-2}$, where $p>n$ and $q>\frac{n-2}{2}$. Let $A_i$ denote the dyadic annulus $2^i\le |x|\le 2^{i+1}$. For any $s\in [\frac{2n}{n+2},p]$ and $i$ sufficiently large there exists a constant $C$ independent of $f$ and $i$ such that
\[\|f\|_{L^s(A_i)}\le C2^{-\eta i}\|f\|_{L^p_{-q-2}},\]
where $\eta = q-\frac{n-2}{2}>0$.
\end{lem}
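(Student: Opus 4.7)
The plan is a direct dyadic computation exploiting two standard observations: on the annulus $A_i$ the weight $|x|^{-q-2}$ is essentially constant (varying by at most a factor of $2^{q+2}$), and on a bounded-volume set Hölder's inequality converts $L^p$ control into $L^s$ control for any $s \le p$. No deep machinery is needed—it is a bookkeeping exercise.

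Concretely, I would first compare the weighted and unweighted $L^p$ norms on $A_i$. Unwinding the definition of $\|\cdot\|_{L^p_{-q-2}}$,
\[
\|f\|_{L^p_{-q-2}(A_i)}^p = \int_{A_i} |x|^{(q+2)p - n} |f|^p\, dx \; \asymp \; 2^{i((q+2)p - n)} \|f\|_{L^p(A_i)}^p,
\]
so $\|f\|_{L^p(A_i)} \lesssim 2^{-i(q + 2 - n/p)} \|f\|_{L^p_{-q-2}}$. Next, using $|A_i| \lesssim 2^{in}$ and $s \le p$, Hölder yields
\[
\|f\|_{L^s(A_i)} \le |A_i|^{1/s - 1/p} \|f\|_{L^p(A_i)} \lesssim 2^{in(1/s - 1/p)} \|f\|_{L^p(A_i)}.
\]
Composing the two estimates gives
\[
\|f\|_{L^s(A_i)} \lesssim 2^{i(n/s - q - 2)} \|f\|_{L^p_{-q-2}}.
\]

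It remains to check the exponent. The function $s \mapsto n/s - q - 2$ is decreasing in $s$, so on the interval $[\tfrac{2n}{n+2}, p]$ it is maximized at $s = \tfrac{2n}{n+2}$, where it equals $\tfrac{n+2}{2} - q - 2 = -(q - \tfrac{n-2}{2}) = -\eta$. Hence the exponent is $\le -\eta$ throughout the allowed range, giving the claimed bound.

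I do not foresee any real obstacle. The only minor subtlety is the "$i$ sufficiently large" caveat, which I take to be needed to ensure $A_i$ lies inside the asymptotic region $\mathcal E$, so that $|x|$ is the genuine coordinate norm entering the weighted-norm definition and the comparison $2^{i((q+2)p-n)} \asymp |x|^{(q+2)p-n}$ on $A_i$ is meaningful.
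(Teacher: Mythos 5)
Your proof is correct and is essentially the paper's own argument: Hölder with the volume factor $|A_i|^{1/s-1/p}$ combined with the fact that $|x|\asymp 2^i$ on $A_i$ to trade the weighted $L^p$ norm for the unweighted one, followed by the same exponent check that $n/s-q-2\le -\eta$ on $[\tfrac{2n}{n+2},p]$ with equality at the left endpoint. The only cosmetic difference is the order in which you apply the two steps.
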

\begin{proof}
 We first use H\"older's inequality to estimate 
 \begin{align*}
     \left(\int_{A_i}|f|^s\,dx\right)^{\frac ps} &\le \vol(A_i)^{\frac{p}{s}-1}\int_{A_i}|f|^p\,dx\\
     &\le C (2^i)^{n(\frac ps-1)} \int_{A^i}|f|^p\,dx.
 \end{align*}
 Now 
 \[(2^i)^{n(\frac ps-1)}=\left((2^{i})^{\frac ns-(q+2)}\right)^p \cdot \left((2^{i})^{q+2}\right)^p(2^i)^{-n},\]
 so that 
  \begin{align*}
     (2^i)^{n(\frac ps-1)} \int_{A^i}|f|^p\,dx &=\left((2^{i})^{\frac ns-(q+2)}\right)^p\int_{A_i}|(2^{i})^{q+2}f|^p\,\frac{dx}{(2^i)^n}\\
     &\le C \left((2^{i})^{\frac ns-(q+2)}\right)^p \int_{A_i}||x|^{q+2}f|^p\,\frac{dx}{|x|^n}\\
     &\le C (2^{-i\eta})^p \|f\|_{L^p_{-q-2}}^p.\qedhere
 \end{align*}
\end{proof}

\begin{lem}
Assume the hypotheses and notation of Theorem \ref{DensityTheorem}. There exist constants $C$ and $I$ with the following property. Let $\lambda_i=2^i$. Then for any $i\ge I$ and $\sigma\in (0,\frac{\sigma_0}{2})$, there exists a unique solution $v_{\lambda_i,\sigma}\in W^{2,p}_{-q'}(M_\sigma)$ of \eqref{mainPDE2} satisfying 
\[\lim_{i\to\infty}\left(\sup_{M_\sigma}|v_{\lambda_i,\sigma}|+\|v_{\lambda_i,\sigma}\|_{W^{2,p}_{-q'}(M_{\sigma_0})}\right)=0.\]
\end{lem}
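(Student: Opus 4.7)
The plan is to apply Proposition~\ref{Existence} (with $q$ replaced by $q'$; since $g$ is $(p,q)$ Sobolev asymptotically flat, it is also $(p,q')$ asymptotically flat, and the same holds for $g_\lambda$) to the metric $g_{\lambda_i}$, potential $V_i := \tfrac{1}{a}(R_{\lambda_i} - \chi_{\lambda_i} R_g)$, and source $f_i := -V_i$. The crucial structural observation is that $V_i$ is supported in the single dyadic annulus $A_i := \{\lambda_i \le |x| \le 2\lambda_i\} \subset \mathcal{E}$: on $\{|x| \le \lambda_i\}$ (which contains the core) one has $\chi_{\lambda_i} \equiv 1$ and $g_{\lambda_i} = g$, so $R_{\lambda_i} - \chi_{\lambda_i} R_g = R_g - R_g = 0$; on $\{|x| \ge 2\lambda_i\}$ one has $\chi_{\lambda_i} \equiv 0$ and $g_{\lambda_i} = \overline{g}$ is flat, so both terms vanish identically. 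For $i$ large enough, $A_i \subset M_{\sigma_0}$, as required by Proposition~\ref{Existence}.

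The smallness hypotheses are then established via two complementary estimates. First, since $g_{\lambda_i} - \overline{g} = \chi_{\lambda_i}(g - \overline{g})$ has $W^{2,p}_{-q}(\mathcal{E})$ norm bounded independently of $i$, and scalar curvature is a second-order operator on the metric, we obtain a uniform bound $\|V_i\|_{L^p_{-q-2}(\mathcal{E})} \le C_0$. Applying Lemma~\ref{dyadic} on the annulus $A_i$ (which corresponds to index $i$ in the dyadic decomposition) gives, for any $s \in [\tfrac{2n}{n+2}, p]$,
\[
\|V_i\|_{L^s(M_{\sigma_0})} = \|V_i\|_{L^s(A_i)} \le C\, 2^{-\eta i} \|V_i\|_{L^p_{-q-2}} \le C C_0\, 2^{-\eta i}, \qquad \eta = q - \tfrac{n-2}{2} > 0,
\]
handling $\|V_i^-\|_{L^{n/2}}$ and $\|V_i\|_{L^{2n/(n+2)}}$. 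Second, trading the weight $q$ for $q'$ on $A_i$ using $|x| \ge \lambda_i$ and $q' - q < 0$,
\[
\|V_i\|_{L^p_{-q'-2}(A_i)}^p = \int_{A_i} |x|^{(q'-q)p} \cdot |x|^{(q+2)p - n} |V_i|^p\, dx \le \lambda_i^{(q'-q)p}\, \|V_i\|_{L^p_{-q-2}(A_i)}^p,
\]
so $\|V_i\|_{L^p_{-q'-2}} \le \lambda_i^{q'-q} C_0 \to 0$ as $i \to \infty$.

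For $i \ge I$ sufficiently large, the combined norms fall below the threshold $\ve_0$ of Proposition~\ref{Existence}, which therefore produces the unique solution $v_{\lambda_i,\sigma} \in W^{2,p}_{-q'}(M_\sigma)$ to \eqref{mainPDE2}, together with the estimate
\[
\sup_{M_\sigma} |v_{\lambda_i,\sigma}| + \|v_{\lambda_i,\sigma}\|_{W^{2,p}_{-q'}(M_{\sigma_0})} \le C\left( \|V_i\|_{L^p_{-q'-2}(M_{\sigma_0})} + \|V_i\|_{L^{2n/(n+2)}(M_{\sigma_0})} \right),
\]
whose right-hand side tends to zero as $i \to \infty$. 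Uniformity of $C$ in both $\sigma \in (0, \tfrac{\sigma_0}{2})$ and $i$ is built into Proposition~\ref{Existence}: the constant depends only on $n, p, q'$ and the $C^{1,\alpha}_{-q'}(M_{\sigma_0/2})$ norm of $g_{\lambda_i} - \overline{g}$, which is uniformly controlled by that of $g - \overline{g}$ because the cutoff $\chi_{\lambda_i}$ and its derivatives produce bounded contributions in this norm on $A_i$. Uniqueness follows from the isomorphism assertion of the same proposition.

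The main conceptual obstacle is the interplay between the two weights: one must bound $V_i$ uniformly in the \emph{stronger} weighted norm $L^p_{-q-2}$ in order to extract the gain $\lambda_i^{q'-q}$ in the \emph{weaker} weighted norm $L^p_{-q'-2}$, and this is precisely why the solution is sought in $W^{2,p}_{-q'}$ rather than in the original $W^{2,p}_{-q}$. This weight-trading mechanism, combined with Lemma~\ref{dyadic}, is the analytical core of the argument.
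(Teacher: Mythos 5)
Your proposal is correct and follows essentially the same route as the paper: apply Proposition~\ref{Existence} to $g_{\lambda_i}$ with the potential $R_{\lambda_i}-\chi_{\lambda_i}R_g$ supported in the single annulus $A_i$, verify smallness of the unweighted norms via Lemma~\ref{dyadic}, and use the drop from $q$ to $q'$ to gain the factor $\lambda_i^{-(q-q')}$ in the weighted norm. The only difference is organizational: the paper establishes a pointwise bound $|R_\lambda-\chi_\lambda R_g|\le C|x|^{-q-2}+C|x|^{-q}|\partial^2 g|$ by expanding the scalar curvature and noting the cancellation of the worst second-derivative terms, whereas you trade weights directly on a uniform $L^p_{-q-2}(A_i)$ bound for the whole potential — a slightly cleaner bookkeeping of the same mechanism.
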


\begin{proof}
 We apply Proposition \ref{Existence} with $g=g_{\lambda_i}$ and $V=-f=R_{\lambda_i} -\chi_{\lambda_i} R_g$. We only need to check that the relevant estimates are satisfied. To obtain the required weighted $L^p$ smallness for the potential $V$, we will be required to lower the decay rate from $q$ to $q'$.
 
 First, we claim that the constants in the pointwise decay for $g_\lambda$, \eqref{pointwise}, are uniformly bounded. Indeed, in coordinates,
 \[g_{\lambda\,ij}-\delta_{ij}=\chi_\lambda (g_{ij}-\delta_{ij})\]
 and 
 \[\partial_kg_{\lambda\,ij}=\partial_k \chi_\lambda (g_{ij}-\delta_{ij})+\chi_\lambda \partial_kg_{ij}.\]
 We clearly have 
 \begin{equation}\label{glampointwise}|g_{\lambda\,ij}-\delta_{ij}|\le C_g|x|^{-q}\end{equation}
independently of $\lambda$. For the derivative, note that $|\partial\chi_\lambda|\approx \lambda^{-1}\approx |x|^{-1}$ on $\spt(\partial \chi_\lambda)$, so 
\begin{equation}|\partial_k g_{\lambda\,ij}|\le C|x|^{-1}\cdot C_g |x|^{-q}+C_g |x|^{-q-1},\label{derivativepointwise}\end{equation}
which is also uniform in $\lambda$. 

We now claim that 
\begin{equation}\lim_{\lambda\to\infty}\|R_\lambda -\chi_\lambda R_g\|_{L^p_{-q'-2}}=0.\label{H1}\end{equation}
The unweighted estimates for $R_{\lambda_i}-\chi_{\lambda_i}R_g$ then follow from Lemma \ref{dyadic} with $s=\frac n2$ and $\frac{2n}{n+2}$:
\[\|R_{\lambda_i}-\chi_{\lambda_i}R_g\|_{L^\frac{2n}{n+2}\cap L^\frac n2}\le C2^{-\left(q'-\frac{n-2}{2}\right)i}\|R_{\lambda_i} -\chi_{\lambda_i} R_g\|_{L^p_{-q'-2}} \underset{i\to 0}{\to} 0.\]

A proof of \eqref{H1} can be found in \cite{Kuwert}, for example, but we give the argument here for completeness. It is easiest to compute $R_\lambda -\chi_\lambda R_g$ and note that the worst decaying second derivatives of $g$ cancel out. Indeed, we have 
\[R_\lambda = \partial_j(\partial_i g_{\lambda\, ij}-\partial_j g_{\lambda\,ii})+O((g_\lambda-\delta)\partial^2 g_\lambda)+O((\partial g_\lambda)^2),\]
with summation over $i$ and $j$ implied in the first term on the right. This term can be expanded as 
\begin{align*}\partial_j(\partial_i g_{\lambda\, ij}-\partial_j g_{\lambda\,ii})&=\chi_\lambda \partial_j(\partial_i g_{ij}-\partial_j g_{ii})+\partial\chi_\lambda \partial g+\partial^2\chi_\lambda (g-\delta)\\&=\chi_\lambda \partial_j(\partial_i g_{ij}-\partial_j g_{ii})+O(|x|^{-q-2}).\end{align*} For first derivatives we again have $|\partial g_\lambda|\le C|x|^{-q}$ and for second derivatives 
\begin{align*}\partial^2 g_\lambda &=\partial^2\left(\chi_\lambda (g-\delta)\right)= \partial^2\chi_\lambda (g-\delta)+\partial\chi_\lambda \partial(g-\delta)+\chi_\lambda \partial^2g\\ &=\chi_\lambda\partial^2g+O(|x|^{-q-2}).\end{align*} 
Here all instances of Landau notation occur with implied constants independent of $\lambda$. Putting everything together, we find that 
\[|R_\lambda  - \chi_\lambda R_g|\le C|x|^{-q-2}+C|x|^{-q}|\partial^2 g|\]
when $\lambda\le |x|\le 2\lambda$ and this difference vanishes everywhere else. For the second term, we have 
\[\||x|^{-q}\partial^2 g\|_{L^p_{-q-2}(\{\lambda\le |x|\le 2\lambda\})}\le C\lambda^{-q}\to 0,\]
so the same is true with decay $q'<q$. Now $q'$ becomes crucial for the first term, since we have 
\[\||x|^{-q-2}\|_{L^p_{-q'-2}(\{\lambda\le |x|\le 2\lambda\})}^p\le C\int_\lambda^{2\lambda}r^{-p(q-q')-1}\,dr\le C\lambda^{-p(q-q')}\to 0.\]
This completes the proof of the claim and hence the lemma follows.
\end{proof}

\begin{proof}[Proof of Theorem \ref{DensityTheorem}]
For $\lambda_i =2^i$, let $u_{\lambda_i}$ be the functions whose existence is guaranteed by  applying Proposition \ref{ConfFactConst} to the metric $g_{\lambda_i}$ with $V=R_\lambda -\chi_\lambda R_g$, and let $\tilde g_i=u_{\lambda_i}^\frac{4}{n-2}g_{\lambda_i}$. By construction, these are harmonically flat and
$\ve$-close to $g$ in the asymptotic topology and on $K$. We first check part (i) of Definition \ref{EpsClose}. This follows from smallness of $g_{\lambda_i}-g$ and $u_{\lambda_i}-1$ in $W^{2,p}_{-q}$. As the second claim is a part of the package in Proposition \ref{ConfFactConst}, we only need to prove the first. First, we observe that 
\[\|g_\lambda-g\|_{W^{2,p}_{-q}}\le C\|\chi_\lambda-1\|_{C^2}\|g-\delta\|_{W^{2,p}_{-q}}\le C.\]
Then we note that $\spt(g_\lambda-g)\subset\{|x|\ge \lambda\}$, which implies
\[\|g_\lambda-g\|^p_{L^p_{-q'-2}}=\int_{|x|\ge \lambda}|x|^{-(q-q')}||x|^q|g_\lambda-g|^p\,\frac{dx}{|x|^n}\le C\lambda^{-(q-q')}\to 0.\]
Similar considerations apply to the derivatives and hence part (i) is proved. 
 By~\eqref{conformalformula}, the scalar curvature is given by
\[R_{\tilde g_i}=\chi_{\lambda_i}R_g u_{\lambda_i}^\frac{4}{n-2}.\]
From this we see that $R_{\tilde g_i}(x)\ge 0$ whenever $R_g(x)\ge 0$. We now prove part (ii). We have
\begin{align*}
    R_{\tilde g_i}-R_g&=\chi_{\lambda_i}R_g u^\frac{4}{n-2}_{\lambda_i}-R_g\\
    &=\chi_{\lambda_i}(u^\frac{4}{n-2}_{\lambda_i}-1)R_g + (\chi_\lambda-1)R_g.
\end{align*}
For the first term, we estimate
\[\int_\mathcal E \chi_{\lambda_i}|u^\frac{4}{n-2}_{\lambda_i}-1||R_g|\le  \sup|u^\frac{4}{n-2}_{\lambda_i}-1|\int_\mathcal E |R_g|\to 0. \]
For the second term, we have 
\[\int_\mathcal E |\chi_\lambda-1||R_g|\le \int_{|x|\ge \lambda}|R_g|\to 0\]
by elementary measure theory. Part (iii) now follows from parts (i) and (ii) together with Lemma \ref{mass_conv}.

 Since $K$ is compact, the cutoff region $|x|\ge 2^i$ misses $K$ for $i$ sufficiently large and there exists constant $C$ such that $\sup_K|R_g|\le C$. It follows that 
 \[\sup_K|R_{\tilde g_i}-R_g|\le C\sup_K|1-u_{\lambda_i}^\frac{4}{n-2}|=o(1)\]
 as $i\to\infty$, where $o(1)$ follows from the estimate~\eqref{2d} for $v_{\lambda, \sigma}$. 
\end{proof}

\section{Pushing the scalar curvature up and down}\label{UpDown}
  
  In this section we explicitly describe a well-known mechanism for increasing or decreasing mass by making appropriate conformal changes. The precise statements, which we prove are valid in the context of incomplete manifolds, will be required in our proof of rigidity in the positive mass theorem, and in Corollary \ref{CZ1}. 
  
\begin{prop}[Pushing down]\label{pushingdown}
 Let $(M^n,g)$ be an asymptotically flat manifold of Sobolev type $(p,q)$. Suppose $R_g>0$ somewhere on $M$. For any $\ve>0$, there exists a $(p,q)$ Sobolev asymptotically flat metric $\tilde g$ on $M$ which is $\ve$-close to $g$ in the asymptotic topology, with $\spt(R_{\tilde g}^-)=\spt(R_g^-)$ and 
 \[m_\mathrm{ADM}(\mathcal E, \tilde g)<m_\mathrm{ADM}(\mathcal E, g).\]
\end{prop}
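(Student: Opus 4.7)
The plan is to construct $\tilde g$ as a conformal perturbation $\tilde g = u^{4/(n-2)} g$ using a nonnegative bump supported where $R_g>0$, thereby producing a \emph{negative} potential in the scalar curvature transformation law, which pushes the mass down. Since $R_g>0$ somewhere on $M$, I fix a smooth, nonnegative, compactly supported function $\psi$ on $M$, not identically zero, with $\spt(\psi)\subset\{R_g>0\}$ and (after scaling) $\psi\le R_g$ pointwise on $\spt(\psi)$. Because $\spt(\psi)$ is compact in $M$ and the sets $M_\sigma=\{\rho\ge\sigma\}$ exhaust $M$ as $\sigma\to 0$, there is a regular value $\sigma_0>0$ of $\rho$ with $\spt(\psi)\subset M_{\sigma_0}$.

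For $t>0$ set $V_t=t\psi$. Then $V_t^-\equiv 0$ and $V_t$ is smooth, compactly supported in $M_{\sigma_0}$, and its $L^{n/2}$, $L^p_{-q-2}$, and $L^{2n/(n+2)}$ norms are all $O(t)$ as $t\to 0$. Taking $t$ small enough, Proposition~\ref{ConfFactConst} produces a positive $u_t$ on $M$ with $u_t-1\in W^{2,p}_{-q}(M_{\sigma_0})$ solving $-a\Delta_g u_t + V_t u_t = 0$, such that $\tilde g_t := u_t^{4/(n-2)}g$ is asymptotically flat of Sobolev type $(p,q)$, with
\[R_{\tilde g_t}=(R_g-t\psi)\,u_t^{-4/(n-2)},\qquad m_\mathrm{ADM}(\mathcal E,\tilde g_t)=m_\mathrm{ADM}(\mathcal E,g)-\frac{t}{2(n-1)\omega_{n-1}}\int_M \psi\, u_t\,d\mu_g.\]
Since $u_t>0$, $t\psi\le R_g$ on $\spt(\psi)$, and $t\psi=0$ off $\spt(\psi)$, the sign of $R_{\tilde g_t}$ coincides with that of $R_g$ everywhere; in particular $\spt(R_{\tilde g_t}^-)=\spt(R_g^-)$. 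And since $\psi\ge 0$ is not identically zero and $u_t>0$, the integral is strictly positive, so the mass strictly decreases.

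It remains to verify the four items of Definition~\ref{EpsClose} for $t$ small. Applying the estimate~\eqref{2d} from Proposition~\ref{Existence} to $v_t=u_t-1$ (which solves the same equation with $f=-V_t$) yields
\[\sup_{M}|u_t-1|+\|u_t-1\|_{W^{2,p}_{-q}(M_{\sigma_0})}\le C\bigl(\|V_t\|_{L^p_{-q-2}}+\|V_t\|_{L^{2n/(n+2)}}\bigr)=O(t).\]
This immediately gives item~(iv) and, via the usual composition/algebra arguments for $W^{2,p}_{-q}$ applied to $\tilde g_t-g=(u_t^{4/(n-2)}-1)(\delta+(g-\delta))$, item~(i). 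For item~(ii), expand
\[R_{\tilde g_t}-R_g=R_g\bigl(u_t^{-4/(n-2)}-1\bigr)-t\psi\,u_t^{-4/(n-2)};\]
the $L^1(\mathcal E)$ norm of the first summand is bounded by $\|R_g\|_{L^1(\mathcal E)}\sup_M|u_t^{-4/(n-2)}-1|$ and that of the second by $Ct\|\psi\|_{L^1}$, both $O(t)$. Item~(iii) then follows from Lemma~\ref{mass_conv}. Taking $t$ small enough so that all four quantities fall below $\varepsilon$ produces the desired $\tilde g$. The only step requiring any care is item~(ii): it is the place where one must use, crucially, that $R_g\in L^1(\mathcal E)$ is built into the definition of asymptotic flatness, so that a mere uniform (not weighted) estimate on $u_t-1$ suffices to control the scalar curvature difference.
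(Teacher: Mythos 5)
Your proposal is correct and follows essentially the same route as the paper: both subtract a small nonnegative, compactly supported potential dominated by $R_g$ on its support (the paper uses $\delta\eta R_g$ with $0<\eta<1$, you use $t\psi$ with $\psi\le R_g$), solve the conformal equation via Propositions~\ref{Existence} and~\ref{ConfFactConst}, and read off the strict mass decrease from~\eqref{massformula} while the pointwise sign of the scalar curvature is preserved. Your more explicit verification of the four items of Definition~\ref{EpsClose} is a correct elaboration of what the paper compresses into ``the relevant norms are $O(\delta)$.''
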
 
\begin{proof}
Let $B$ be a ball on which $R_g>0$. Let $\eta$ be a smooth cutoff function for $B$ such that $0<\eta < 1$ on $B$ and $\eta=0$ on $M\setminus B$. For $\delta>0$ we now solve the equation 
\[-a\Delta_g u_\delta +\delta \eta R_g u_\delta=0\]
for $u_\delta-1\in W^{2,p}_{-q}(M)$ in the sense of Propositions \ref{Existence} and \ref{ConfFactConst}. The relevant norms are $O(\delta)$, so for $\delta$ sufficiently small we obtain a unique solution of this equation with the desired asymptotic behavior. Our previous computation shows that
\[R(u_\delta^\frac{4}{n-2}g)= (1-\delta \eta)R_g u_\delta^{-\frac{4}{n-2}}.\]
Since $1-\delta \eta>0$, the sign of the scalar curvature remains pointwise unchanged. Finally, by~\eqref{massformula}, we have
\[m_\mathrm{ADM}(\mathcal E, \tilde g_\delta)-m_\mathrm{ADM}(\mathcal E, g)= -\frac{1}{2(n-1)\omega_{n-1}}\int_M \delta \eta R_gu_\delta\,d\mu_g <0,\]
so the mass strictly decreases.
\end{proof}

\begin{prop}[Bumping up]\label{bumpingup}
 Let $(M^n,g)$ be an asymptotically flat manifold of Sobolev type $(p,q)$ with nonnegative scalar curvature on $\mathcal E$. Let $f:\Bbb R\to [0,1]$ be an exponentially decreasing smooth function with $f(x)>0$ for $x>2r_0$ and $f$ vanishing on $M\setminus\mathcal E$. For sufficiently small $\ve>0$, depending only on $f$ and $\|g-\delta\|_{W^{2,p}_{-q}(\mathcal E)}$, there exists a $(p,q)$ Sobolev asymptotically flat metric $\tilde g$ which is $\ve$-close to $g$ in the asymptotic topology and satisfies $\spt(R_{\tilde g}^-)\subset \spt(R_g^-)$ and $R_{\tilde g}(x)\ge c f(|x|)$ for $|x|\ge 2r_0$ and a constant $c>0$ depending only on $\ve$ and the other stated parameters.
\end{prop}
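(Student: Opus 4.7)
The idea is to construct $\tilde g$ as a conformal perturbation $\tilde g = u^{4/(n-2)} g$, where $u$ is a conformal factor close to $1$ solving $-a\Delta_g u + Vu = 0$ with $V = -\delta f$ for sufficiently small $\delta > 0$. By the conformal formula \eqref{conformalformula}, the new scalar curvature is
\[R_{\tilde g} = (R_g + \delta f)\, u^{-4/(n-2)}.\]
Since $f \ge 0$ globally, this is nonnegative wherever $R_g$ is, giving $\spt(R_{\tilde g}^-) \subset \spt(R_g^-)$. On $\{|x| \ge 2r_0\}$, where $R_g \ge 0$ on the end and $f(|x|) > 0$, we obtain $R_{\tilde g} \ge c f$ with $c = \delta \inf u^{-4/(n-2)} > 0$, as required.

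The principal technical issue is that Propositions \ref{Existence} and \ref{ConfFactConst} require $\spt V$ to be \emph{compact} in $M$, whereas $V = -\delta f$ has only exponential decay on $\mathcal E$. I would bypass this by approximation. Let $V_k = -\delta f \cdot \chi_k$, where $\chi_k$ is a smooth cutoff equal to $1$ on $\{r_0 \le |x| \le k\}$ and supported in $\{|x| \le 2k\}$; then $V_k$ is compactly supported in $M_{\sigma_0}$ for any $\sigma_0 \le r_0$. Because $f$ decays exponentially, all the norms appearing in the hypotheses of Propositions \ref{Existence} and \ref{ConfFactConst}, namely $\|V_k^-\|_{L^{n/2}}$, $\|V_k\|_{L^p_{-q-2}}$, $\|V_k\|_{L^{2n/(n+2)}}$, and $\|V_k\|_{L^1}$, are bounded by $C_f \cdot \delta$ \emph{uniformly in $k$}. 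Choosing $\delta$ small enough (depending only on $f$ and $\|g-\delta\|_{W^{2,p}_{-q}(\mathcal E)}$) that $C_f \delta < \varepsilon_0$, Proposition \ref{ConfFactConst} produces conformal factors $u_k$ with $u_k - 1 \in W^{2,p}_{-q}(M)$ satisfying uniform $W^{2,p}_{-q}$ bounds $\|u_k - 1\|_{W^{2,p}_{-q}} \le C\delta$ and uniform pointwise bounds $\tfrac12 < u_k < \tfrac32$.

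A diagonalization argument using local elliptic regularity and weak compactness in $W^{2,p}_{-q}$ extracts a subsequential limit $u$ solving $-a\Delta_g u - \delta f u = 0$ globally on $M$, with the same pointwise bounds. The uniform $W^{2,p}_{-q}$ estimate moreover gives uniform pointwise decay of $u - 1$ at infinity, keeping $u$ bounded away from zero on all of $\mathcal E$. I would flag this as the key subtlety: the lower bound $R_{\tilde g} \ge cf$ must hold uniformly on $\{|x|\ge 2r_0\}$ up to and including infinity, and this is exactly what the uniform weighted estimate buys us. The $\varepsilon$-closeness of $\tilde g$ to $g$ in the sense of Definition \ref{EpsClose} then follows by the same arguments as at the end of the proof of Theorem \ref{DensityTheorem}: smallness of $\tilde g - g$ in $W^{2,p}_{-q}(\mathcal E)$ follows from smallness of $u - 1$; the $L^1(\mathcal E)$-smallness of $R_{\tilde g} - R_g = R_g(u^{-4/(n-2)} - 1) + \delta f u^{-4/(n-2)}$ follows from $R_g \in L^1(\mathcal E)$, sup-norm smallness of $u^{-4/(n-2)} - 1$, and $f \in L^1(\mathcal E)$ by exponential decay; mass convergence is Lemma \ref{mass_conv}; and the pointwise bilinear-form closeness follows from $u$ being uniformly close to $1$.
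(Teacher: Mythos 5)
Your proposal is correct and follows essentially the same route as the paper: solve $-a\Delta_g u_\delta-\delta f u_\delta=0$ via Propositions \ref{Existence} and \ref{ConfFactConst}, note that the exponential decay of $f$ makes all the relevant norms of $V=-\delta f$ of size $O(\delta)$, and read off $R_{\tilde g}=(R_g+\delta f)u_\delta^{-4/(n-2)}\ge \delta f\cdot\inf u_\delta^{-4/(n-2)}$ on $\{|x|\ge 2r_0\}$ from \eqref{conformalformula} together with the upper bound on $u_\delta$. The only divergence is that the paper applies those propositions directly to the non-compactly-supported potential $-\delta f$ (their proofs only use $\spt V\subset M_{\sigma_0}$ and smallness of the stated norms, both of which exponential decay supplies), whereas you insert a cutoff-and-diagonalization step; that extra step is harmless but not needed.
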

\begin{proof}
For $\delta>0$ we solve the equation 
\[-a\Delta_g u_\delta -\delta f u_g=0\]
in the sense of Propositions \ref{Existence} and \ref{ConfFactConst}. 
The norms are again $O(\delta)$, so we can solve the equation with the desired asymptotics, for $\delta$ small depending on $f$, the geometry, and $\ve$. The scalar curvature of the conformal metric is 
\[R(\tilde g_\delta)=(R_g+\delta f)u^{-\frac{4}{n-2}}_\delta.\] For $\delta$ small depending only on the allowable parameters, $u_\delta \le 2$. It follows that $R(\tilde g_\delta)\ge \delta  2^{-\frac{4}{n-2}}f$. Finally, we note that the mass strictly increases in this process. 
\end{proof}

\section{Proofs of the positive mass theorems}\label{sec:PMT}

\begin{proof}[Proof of Theorem \ref{thm1}]
For $(M,g)$ as in the statement of the theorem, suppose \hbox{$m_\mathrm{ADM}(\mathcal E, g)<0$}. We approximate $g$ with metrics $\tilde g$ as in the density theorem. For $\ve$ sufficiently small, $(M,\tilde g)$ satisfies the hypotheses of the corresponding theorem for asymptotically Schwarzscild manifolds, \cite[Theorem 1.7]{LUY21}.\footnote{The constant in \cite{LUY21} is worse than the one cited here. The better constant follows from a more carefully constructed $\mu$-bubble function $h$, as presented in in Section \ref{MOTS} below.} But for $\ve$ small we still have $m_\mathrm{ADM}(\mathcal E, \tilde g)<0$, which gives a contradiction. 

Finally, we must also rule out $m_\mathrm{ADM}(\mathcal E, g)=0$. By an immediate application of Proposition \ref{pushingdown}, we can find approximating metrics $\tilde g$ with negative mass and satisfying the hypotheses of the theorem, thereby contradicting the previous paragraph.
\end{proof}

We next prove the positive mass theorem for complete manifolds, with rigidity. 

\begin{proof}[Proof of Theorem \ref{pmt}]
Let $(M,g)$ satisfy the hypotheses of the theorem.
We first prove the inequality $m_\mathrm{ADM}(\mathcal E, g)\ge 0$. Suppose otherwise. Then by our density theorem (Theorem~\ref{DensityTheorem}), we can find $\tilde{g}$ that is harmonically flat outside a compact set and has negative mass. In particular, $\tilde g$ is asymptotically Schwarzschild, so this 
contradicts the known positive mass inequality for complete manifolds with an asymptotially Schwarzschild end \cite[Theorem 1.2]{LUY21}.

We now prove rigidity, which roughly follows the standard conformal approach, except that we use our new results from Sections~\ref{preliminaries} and~\ref{UpDown}. Assume $m_\mathrm{ADM}(\mathcal E, g)= 0$.
First we claim that $g$ is scalar-flat. Otherwise we can use Proposition \ref{pushingdown} to obtain a new metric which still has nonnegative scalar curvature but has negative mass, contradicting the positive mass inequality that we already proved.
Next, we show that $g$ is Ricci-flat as well. 

Let $\eta$ be a compactly supported cutoff function and $t\in \Bbb R$. We consider the deformed metrics $g_t=g+ t\eta \Ric_g$. For $t$ sufficiently small, these will indeed be Riemannian metrics and will satsify the analytic hypotheses of Section \ref{preliminaries} uniformly. We solve the equations 
\[-a\Delta_{g_t}u_t+R_{g_t}u_t=0\]
to obtain a scalar-flat metric  $\tilde g_t:=u_t^\frac{4}{n-2}g_t$. One can see that this is possible by Propositions \ref{Existence} and \ref{ConfFactConst}, for $|t|$ sufficiently small. 
By the positive mass inequality for $\tilde{g}$, we know that    $\frac{m_\mathrm{ADM}(\mathcal E, \tilde g_t)}{t} \ge 0
  $ for $t>0$ and 
   $\frac{m_\mathrm{ADM}(\mathcal E, \tilde g_t)}{t} \le  0$ for $t<0$.
Then by~\eqref{massformula}, we can see that
\[-\frac{1}{2(n-1)\omega_{n-1}}\lim_{t\to 0}\frac{1}{t}\int_M R_{g_t}u_t\,d\mu_{g_t}=   \lim_{t\to 0} \frac{m_\mathrm{ADM}(\mathcal E, \tilde g_t)}{t} =0.
   \]
Using the dominated convergence theorem and the calculation as in \cite[page 96]{Lee}, we can also see that
\[\lim_{t\to 0}\int_M \frac{R_{g_t}}{t}u_t\,d\mu_{g_t}=\int_M\eta|\Ric_g|^2,\]
Combining the two equalities above, we see that $\Ric_g=0$ on the support of $\eta$. Since $\eta$ was arbitrary, this implies $g$ is Ricci-flat. 

Now we show that $(M,g)$ has only one end. If $M$ has a second end, then it contains a geodesic line $\gamma$ which goes out to infinity along the asymptotically flat end \cite{Petersen}. By the Cheeger--Gromoll theorem, $(M,g)$ splits isometrically along this line as $(\Bbb R\times N, dt^2 +h)$. Concretely, there is a smooth function $f:M\to\Bbb R$ whose level sets foliate $M$ and are all isometric to $(N,h)$, the isometry being the gradient flow of $f$. If $N$ is not flat, then there exists a ball $B\subset M$ such that 
\begin{equation}\label{7a}\int_{B}|\Rm_g|^p\,d\mu_g>0.\end{equation}
We flow $B$ along the gradient flow of $f$ in the direction of the asymptotically flat end. Since the gradient flow is an isometry, the distance between the ball and $\gamma$ is unchanged, as is the integral \eqref{7a}. But $\Rm_g\in L^p(\mathcal E)$, so the integral \eqref{7a} must $\to 0$ as the ball goes further and further out, which is a contradiction. 

Since $N$ is flat, it is either isometric to $\Bbb R^{n-1}$ or a nontrivial quotient, in which case $\pi_1(N)\ne 0$. We must rule out the latter case. Let $\ell$ be a homotopically nontrivial loop in $N$. Then $\gamma$ is contained in some compact ball and we can push it along the line $\gamma$. It enters $\mathcal E\approx \Bbb R\times S^{n-1}$ and can be contracted, a contradiction. 

We conclude that $(M,g)$ cannot have two or more ends. We now show that $(M^n,g)$ is flat. Take a point $p\in M$ and consider the volume ratio 
\[V(r)=\frac{\vol_g(B_r(p))}{\alpha_n r^n}.\]
We have 
\[\lim_{r\to 0}V(r)=\lim_{r\to\infty}V(r)=1.\]
The limit as $r\to 0$ is true for any manifold, the limit as $r\to\infty$ is a consequence of one-endedness and asymptotic flatness. A careful proof can be found in \cite[Lemma 2.6]{LiYu}. Now the Bishop--Gromov volume comparison theorem implies $V(r)=1$ for any $r$ and hence $(M,g)$ is flat. 
\end{proof}

\begin{proof}[Proof of Corollary \ref{CZ1}]
Apply Proposition \ref{bumpingup} with an arbitrarily chosen $f$ and $\ve$ sufficiently small that $m_\mathrm{ADM}(\mathcal E, \tilde g)<0$. Then we know that the hypotheses of Theorem \ref{thm1} are violated in $N_D^{\tilde g}(\mathcal E)$ for $\tilde g$, where $D$ is such that \eqref{largeness} is satisfied on some annular region where $f>0$. However, $\tilde g$ does not have any new points of negative scalar curvature and no new incomplete points, so one of the hypotheses must be violated for $g$ in $N^{\tilde g}_D(\mathcal E)$ as well.  Since $g$ and $\tilde g$ are uniformly equivalent, $N_D^{\tilde g}(\mathcal E)\subset N^g_{D'}(\mathcal E)$ for some $D'$ close to $D$.
 \end{proof}

\section{Analogy between $\mu$-bubbles and MOTS} \label{MOTS} 

We begin by recalling Gromov's $\mu$-bubbles \cite{G96,G18}: 

\begin{defn}\label{mububble}
Let $(M,g)$ be a Riemannian manifold and $h$ (called the \emph{potential}) a continuous extended real-valued function on $M$ with the following properties: 

\begin{enumerate}
    \item Let $M_0$ be the closure of $\{|h|<\infty\}$ in $M$. Then $h$ is smooth on $M_0$.
    
    \item $\partial M_0=\partial M_+\cup \partial M_-$, where $\partial M_\pm$ are \emph{nonempty} smooth closed embedded hypersurfaces and $h|_{\partial M_\pm}=\pm\infty$. 
\end{enumerate}
Let $\Omega_0$ be a Caccioppoli set in $M_0$ which contains $\partial M_+$.
For $\Omega$ another such Caccioppoli set, we define 
\[\mathcal F(\Omega)= \mathcal H^{n-1}_g(\partial^*\Omega)-\int_{M_0}(\chi_\Omega-\chi_{\Omega_0})\,h\,d\mu_g.\]
We say that $\Omega$ is a \emph{$\mu$-bubble} if it is a critical point of this functional under variations satisfying $\Omega\symD\Omega_0\subset\subset M_0$, or equivalently, if $\partial\Omega$ has prescribed mean curvature $h$ (with the normal oriented pointing towards $\partial M_-$). \emph{Stable} and \emph{minimizing} $\mu$-bubbles are defined in the obvious way. 
\end{defn}

We also recall the following existence and regularity result \cite[Proposition 2.1]{Z21} (see also \cite[Proposition 12]{CL20}). 

\begin{lem}\label{Zhu}
If $2\le n\le 7$, nonempty stable $\mu$-bubbles exist, and are smooth.  
\end{lem}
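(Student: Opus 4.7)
The plan is to apply the direct method of the calculus of variations to minimize $\mathcal{F}$ over the admissible class $\mathcal{A}=\{\Omega \text{ Caccioppoli} : \Omega\symD\Omega_0\subset\subset M_0\}$. First I would select a minimizing sequence $\Omega_i\in\mathcal{A}$; since $\Omega_0$ is itself a competitor, $\inf_{\mathcal{A}}\mathcal{F}\le \mathcal{F}(\Omega_0)=\Hc^{n-1}_g(\partial^*\Omega_0)<\infty$, and the energies $\mathcal{F}(\Omega_i)$ are uniformly bounded. The essential role of the potential is that the divergence of $h$ at $\partial M_\pm$ supplies a barrier preventing $\partial^*\Omega_i$ from escaping to these boundaries: if a piece of $\Omega_0\setminus\Omega_i$ approaches $\partial M_+$, where $h\to+\infty$, the contribution $-\int_{\Omega_0\setminus\Omega_i}(-1)h$ to $\mathcal{F}$ is arbitrarily large; the symmetric argument near $\partial M_-$ (where $h\to-\infty$) rules out $\Omega_i\setminus\Omega_0$ accumulating there. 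Quantitatively, for $L$ sufficiently large, any $\Omega\in\mathcal{A}$ with $\mathcal{F}(\Omega)\le \mathcal{F}(\Omega_0)$ must satisfy $\Omega\symD\Omega_0\subset\{|h|\le L\}$, trapping the minimizing sequence in a fixed compact set $K\subset M_0$ with $\partial K$ at positive distance from $\partial M_\pm$.

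On $K$ the potential is bounded, so rearranging $\mathcal{F}(\Omega_i)\le \mathcal{F}(\Omega_0)$ gives a uniform bound on $\Hc^{n-1}_g(\partial^*\Omega_i\cap K)$. By standard BV-compactness, a subsequence of $\chi_{\Omega_i}$ converges in $L^1_{\loc}$ to some $\chi_\Omega$ with $\Omega\in\mathcal{A}$. Lower semicontinuity of the perimeter under $L^1_{\loc}$ convergence and dominated convergence of the potential term (using $|h|\le L$ on $K$) then show $\mathcal{F}(\Omega)=\inf_{\mathcal{A}}\mathcal{F}$, so $\Omega$ is a minimizing, hence stable, $\mu$-bubble. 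Nonemptiness of $\partial^*\Omega$ in the interior of $M_0$ follows from the sandwich structure: $\Omega$ contains a neighborhood of $\partial M_+$ and is disjoint from a neighborhood of $\partial M_-$, so its reduced boundary must contain a nontrivial component strictly between them.

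Regularity is standard geometric measure theory. The minimizer is a $(\Lambda,r_0)$-almost minimizer of perimeter, with $\Lambda$ controlled by $\|h\|_{L^\infty(K)}$, so by De~Giorgi--Federer--Simons regularity the singular set of $\partial^*\Omega$ has Hausdorff dimension $\le n-8$, which is empty for $2\le n\le 7$. Thus $\partial\Omega$ is a $C^{1,\alpha}$ hypersurface with prescribed mean curvature $h$; elliptic bootstrap on the equation $H_{\partial\Omega}=h$ with smooth $h$ upgrades this to $C^\infty$. The main obstacle is the barrier step of the first paragraph --- rigorously exploiting the blow-up of $|h|$ at $\partial M_\pm$ to confine the minimizing sequence to a compact subset of $M_0^\circ$ --- which is precisely the content of \cite[Proposition 2.1]{Z21}.
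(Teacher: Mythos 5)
Your overall architecture — direct method, confinement away from $\partial M_\pm$, BV compactness plus lower semicontinuity, $(\Lambda,r_0)$-minimality and the De Giorgi--Federer--Simons dimension bound for $2\le n\le 7$, then bootstrap on $H=h$ — is exactly the paper's route, and like the paper you ultimately defer the confinement step to \cite[Proposition 2.1]{Z21}. The regularity and nonemptiness paragraphs are fine.

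The gap is in your justification of the barrier step, and it is not merely a matter of rigor: the mechanism you invoke is the wrong one. You claim that if $\Omega\symD\Omega_0$ reaches into the region where $|h|$ is large then the bulk term of $\mathcal F$ is ``arbitrarily large,'' and you upgrade this to the quantitative assertion that every competitor with $\mathcal F(\Omega)\le\mathcal F(\Omega_0)$ satisfies $\Omega\symD\Omega_0\subset\{|h|\le L\}$. Neither holds. The bulk term is an integral of $h$ against the \emph{volume} of the symmetric difference, and that volume can be made small faster than $h$ blows up: for the potentials actually used in this paper (Section 6), $h\sim(\alpha-\tilde\rho)^{-1}$ near $\partial M_0$, so a ball of radius $\ve$ at distance $\ve$ from $\partial M_+$ contributes only $O(\ve^{n-1})$ to $\int_{\Omega_0\setminus\Omega}h$ and $O(\ve^{n-1})$ to the perimeter. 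Deleting such a ball from $\Omega_0$ and compensating with an $O(\ve^{n-1})$ energy gain elsewhere produces a competitor violating your quantitative claim; more to the point, a minimizing sequence could a priori send vanishing volume all the way to $\partial M_\pm$, so that the limit set fails the open constraint $\Omega\symD\Omega_0\subset\subset M_0$, and your argument does not exclude this. The correct mechanism — the one in the paper's sketch and in \cite{Z21, CL20} — is a comparison with \emph{hypersurface} barriers: foliate collars of $\partial M_\pm$ by closed hypersurfaces, which have uniformly bounded mean curvature $H$ because $\partial M_\pm$ are smooth and closed; since $h\to\pm\infty$ there, one has $H<h$ near $\partial M_+$ and $H>h$ near $\partial M_-$ with the appropriate orientations. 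Truncating any competitor at a leaf then strictly decreases $\mathcal F$, because the perimeter cost of the truncation is controlled (via the divergence theorem applied to the unit normal field of the foliation) by $\int|H|$ over the excised region, which is beaten by the bulk gain $\int|h|$ over the same region since $|h|\gg|H|$ pointwise there. It is this leaf-by-leaf replacement, not the divergence of the bulk integral, that confines the minimizer; you should replace the first paragraph's heuristic accordingly.
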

\begin{proof}[Sketch of Proof]
Since $\partial M_\pm$ are closed, smooth hypersurfaces, any foliation of a tubular neighborhood has uniformly bounded mean curvature. Therefore, $H<h$ near $\partial M_+$ (with the normal pointing away from $\partial M_+$) and $H>h$ near $\partial M_-$ (with the normal pointing away from $\partial M_-$). These surfaces act as barriers, and the BV compactness and regularity theory guarantees the existence and regularity of a minimizing, and hence stable, $\mu$-bubble.
\end{proof}

The utility of stable $\mu$-bubbles is explained by the following computation:

\begin{lem}
If $\Omega$ is a smooth stable $\mu$-bubble with boundary $\Sigma$, then $\Sigma=\partial\Omega$ satisfies the stability inequality 
\begin{equation}\label{mustab}
    \int_{\Sigma}|\nabla\varphi|^2 +\tfrac 12 R_\Sigma\varphi^2-\tfrac 12\left(R_g+\tfrac{n}{n-1}h^2+2\nu(h)\right)\varphi^2\,d\mu_{\Sigma} \ge 0,
\end{equation}
for every $\varphi\in C^1(\Sigma)$. 
\end{lem}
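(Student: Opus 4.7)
The plan is to derive the stability inequality from the general second variation formula for the $\mu$-bubble functional $\mathcal{F}$, and then use the Gauss equation together with a Cauchy--Schwarz bound on $|A|^2$ to recast everything in terms of scalar curvatures.

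First, I will work out the Euler--Lagrange equation. Under a normal variation of $\Sigma$ with speed $\varphi$, one has $\frac{d}{dt}\mathcal H^{n-1}(\Sigma_t) = \int_\Sigma H\varphi$ and $\frac{d}{dt}\int_{\Omega_t} h = \int_\Sigma h\varphi$ (with $\nu$ pointing from $\partial M_+$ toward $\partial M_-$), so that $\delta\mathcal F(\varphi) = \int_\Sigma (H-h)\varphi$ and criticality forces $H = h$ on $\Sigma$. Then, using the standard calculation of the second variation of area plus the obvious variation of $\int h$, the second variation at a critical point takes the familiar form
\begin{equation*}
\delta^2 \mathcal F(\varphi) = \int_\Sigma \Bigl[|\nabla\varphi|^2 - \bigl(|A|^2 + \Ric_g(\nu,\nu)\bigr)\varphi^2 - \nu(h)\,\varphi^2\Bigr]\,d\mu_\Sigma.
\end{equation*}
Stability is precisely the statement that this quantity is nonnegative for every $\varphi \in C^1(\Sigma)$.

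Second, I apply the Gauss equation for a hypersurface, which in the form I need reads
\begin{equation*}
2\Ric_g(\nu,\nu) = R_g - R_\Sigma + H^2 - |A|^2.
\end{equation*}
Substituting this into the stability inequality and using $H = h$ yields
\begin{equation*}
0 \le \int_\Sigma \Bigl[|\nabla\varphi|^2 + \tfrac{1}{2}R_\Sigma\varphi^2 - \tfrac{1}{2}\bigl(R_g + h^2 + 2\nu(h)\bigr)\varphi^2 - \tfrac{1}{2}|A|^2\varphi^2\Bigr]\,d\mu_\Sigma.
\end{equation*}

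Third, I use Cauchy--Schwarz on the trace-free part of $A$, namely $|A|^2 \ge H^2/(n-1) = h^2/(n-1)$. Replacing $-\tfrac12|A|^2$ by the larger quantity $-\tfrac{1}{2(n-1)}h^2$ in the integrand preserves the inequality, and combining the two $h^2$ terms gives $-\tfrac12\bigl(1 + \tfrac{1}{n-1}\bigr)h^2 = -\tfrac{n}{2(n-1)}h^2$, which produces exactly the stated expression involving $\tfrac{n}{n-1}h^2$. No step is especially hard; the only subtlety is keeping track of the sign of $\nu(h)$, which is dictated by the convention that $\nu$ points toward $\partial M_-$ (so that moving $\Sigma$ in the direction of $+\nu$ increases $\int_\Omega h$ with the sign appearing in $\mathcal F$), and verifying that the Gauss equation is used with the correct overall sign. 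The conclusion then follows immediately from the above chain of estimates.
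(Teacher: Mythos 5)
Your proof is correct and follows essentially the same route as the paper: the paper quotes the second variation formula from \cite[Proposition 2.3]{LUY21} (which already incorporates the traced Gauss equation) and then splits $|A|^2$ into its trace and trace-free parts, which is exactly your Cauchy--Schwarz step $|A|^2\ge H^2/(n-1)=h^2/(n-1)$. The only difference is that you derive the second variation and apply the Gauss equation explicitly rather than citing them, and you correctly use the $(n-1)$-dimensional trace inequality, which is what produces the coefficient $\tfrac{n}{n-1}$.
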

\begin{proof}
The second variation of the $\mu$-bubble functional is given by (see \cite[Proposition 2.3]{LUY21})
\begin{equation}\label{mustability}
\int_\Sigma |\nabla \varphi|^2-\tfrac 12\left(R_g-R_\Sigma+|A|^2\right)\varphi^2 -\tfrac 12\left(h^2+2\nu(h)\right)\varphi^2 \,d\mu_{\Sigma}\ge 0.
\end{equation}
We then write $|A|^2= \tfrac 1n H^2+|\mathring A|^2 = \tfrac 1n h^2+|\mathring A|^2$, insert this into the second variation, and rearrange to obtain \eqref{mustab}.
\end{proof}

From this stability inequality, we see that if 
\begin{equation}\tag{$\star$}\label{mububblecondition}
R_g+\tfrac{n}{n-1}h^2-2|\nabla h|\ge 0,
\end{equation}
then $\Sigma$ admits a metric of nonnegative scalar curvature. (And a strict inequality implies $\Sigma$ admits positive scalar curvautre.)  
We will refer to this inequality as condition \eqref{mububblecondition}.\footnote{In \cite{LUY21}, the condition is written as $R_M+h^2-2|\nabla h|>0$. This implies the current condition since $\frac{n}{n-1}>1$. The difference comes from keeping the trace part of $|A|^2$ versus just throwing it away.} 

To explain the connection with marginally outer trapped surfaces, we first recall their definition. Recall that $(M, g, k)$ is an \emph{initial data set} if $(M, g)$ is a Riemannian manifold equipped with a symmetric $(0,2)$-tensor field $k$, and that the \emph{mass density} $\mu$ (a scalar) and the \emph{momentum density} $J$ (a vector field) are defined by
\begin{align*}
    \mu &=\tfrac 12\left(R_g-|k|_g^2+(\tr_gk)^2\right)\\
   J^i &= (\Div_g k)^i -\nabla^i (\tr_g k).
\end{align*}
We say that $(M,g,k)$ satisfies the \emph{dominant energy condition} (\emph{DEC}) if $\mu \ge |J|_g$. 

 A hypersurface $\Sigma$ in an initial data set $(M^n,g,k)$ with distinguished choice of normal $\nu$ is a \emph{marginally outer trapped surface (MOTS)} if 
\[\theta^+=H+P=0,\]
where $H$ is the mean curvature and $P=\tr_\Sigma k=(g^{ij}-\nu^i\nu^j)k_{ij}$. MOTS do not satisfy a variational criterion, but there is a naturally associated stability operator \cite{AMS05}
\[L=-\Delta \varphi+2\langle W_\Sigma,\nabla u\rangle +(\Div_\Sigma W_\Sigma-|W_\Sigma|^2+Q_\Sigma)\varphi,\]
where $W_\Sigma=k(\nu,\cdot)$ restricted to $T\Sigma$, 
\[Q_\Sigma=\tfrac 12 R_\Sigma-\mu-\langle J,\nu\rangle -\tfrac 12 |k_\Sigma+A|^2,\]
where $k_\Sigma$ is $k$ restricted to $T\Sigma$. A MOTS is \emph{stable} if $\lambda_1(L)\ge 0$.\footnote{The operator $L$ is not self-adjoint, but it still has a real principal eigenvalue and an associated real positive eigenfunction. See \cite{AMS05}.} Closely related is the \emph{symmetrized MOTS stability operator} of Galloway--Schoen \cite{GS06}
\[L_\mathrm{sym}=-\Delta_\Sigma +Q_\Sigma.\] Notably, Galloway and Schoen showed that $\lambda_1(L)\le \lambda_1(L_\mathrm{sym})$ for any MOTS. 
We now explain a relationship between stable MOTS and stable $\mu$-bubbles.

\begin{defn}
Given $(M,g,h)$ as in Definition \ref{mububble}, we define a data set $(M_0,g,k_h)$ by taking $M_0=\overline{\{|h|<\infty\}}$ and setting 
\[k_h=-\frac{h}{n-1}g.\]
\end{defn}
The first observation is that if $\Sigma$ is a hypersurface in $M_0$, then $P=-h$, so that 
\[\theta^+=H-h,\]
and thus $\Omega$ is a $\mu$-bubble with respect to $(M, g, h)$ if and only if $\partial\Omega$ is a MOTS with with respect to $(M, g, k_h)$.
In fact, more is true.
\begin{prop} \label{stableMOTS}
 Let $(M,g,h)$ and $\Sigma=\partial \Omega$ smooth be as in Definition \ref{mububble}. Then:
 \begin{enumerate}
 \item \label{part1}$\Sigma$ is a stable $\mu$-bubble if and only if $\Sigma$ is a stable MOTS with respect to $(g,k_h)$. 
 
 \item \label{part2}$(g,h)$ satisfies condition~\eqref{mububblecondition} if and only if $(g,k_h)$ satisfies the dominant energy condition. 
 \end{enumerate}
\end{prop}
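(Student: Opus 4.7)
The plan is to handle both claims by direct computation, exploiting the fact that $k_h = -\tfrac{h}{n-1}g$ is a pure-trace tensor. For part (2), I would just compute $\mu$ and $J$ for the data $(g,k_h)$. The traces $\tr_g k_h = -\tfrac{nh}{n-1}$ and $|k_h|_g^2 = \tfrac{nh^2}{(n-1)^2}$ give $\mu = \tfrac{1}{2}(R_g + \tfrac{n}{n-1}h^2)$. Since $k_h$ is proportional to $g$ with coefficient depending only on $h$, both $\Div_g k_h$ and $\nabla(\tr_g k_h)$ are multiples of $\nabla h$, and the factors of $(n-1)$ combine so that $J = \nabla h$ and $|J|_g = |\nabla h|$. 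Then DEC for $(g,k_h)$ is literally condition~\eqref{mububblecondition}.

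For part (1), the essential observation is that since $k_h$ is a multiple of $g$ and $\nu$ is orthogonal to $T\Sigma$, the tangential shift satisfies $W_\Sigma = k_h(\nu,\cdot)|_{T\Sigma} = 0$. Consequently the MOTS stability operator $L$ collapses to its symmetrized form $L_{\mathrm{sym}} = -\Delta_\Sigma + Q_\Sigma$, which is self-adjoint, so $\lambda_1(L)\ge 0$ is equivalent to the variational inequality $\int_\Sigma(|\nabla\varphi|^2 + Q_\Sigma\varphi^2)\,d\mu_\Sigma \ge 0$ for all $\varphi$. It remains to identify $Q_\Sigma$ with the potential in~\eqref{mustability}. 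Using $k_\Sigma = -\tfrac{h}{n-1}\sigma$ (with $\sigma$ the induced metric) and $H = h$, I would get $|k_\Sigma|^2 = \tfrac{h^2}{n-1}$, $\langle k_\Sigma, A\rangle = -\tfrac{h^2}{n-1}$, and hence $|k_\Sigma + A|^2 = |A|^2 - \tfrac{h^2}{n-1}$. Substituting these together with the formulas from part (2) into $Q_\Sigma = \tfrac{1}{2}R_\Sigma - \mu - \langle J,\nu\rangle - \tfrac{1}{2}|k_\Sigma + A|^2$ yields
\[
Q_\Sigma = \tfrac{1}{2}R_\Sigma - \tfrac{1}{2}R_g - \tfrac{1}{2}h^2 - \nu(h) - \tfrac{1}{2}|A|^2,
\]
which is precisely the integrand appearing in the $\mu$-bubble second variation~\eqref{mustability} after integration by parts.

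I do not anticipate a real obstacle: the proposition is essentially a bookkeeping identity once the dictionary $k \leftrightarrow -\tfrac{h}{n-1}g$ is fixed. The one conceptual point worth flagging is the vanishing of $W_\Sigma$, which is what makes the (a priori nonsymmetric) MOTS stability operator self-adjoint for this choice of $k$; without it one would only recover the one-sided comparison $\lambda_1(L)\le \lambda_1(L_{\mathrm{sym}})$ of Galloway--Schoen, not an equivalence.
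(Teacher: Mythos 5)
Your proposal is correct and follows essentially the same route as the paper: compute $\mu$ and $J$ for the pure-trace data $k_h=-\tfrac{h}{n-1}g$ to identify DEC with condition~\eqref{mububblecondition}, and use $W_\Sigma=0$ (hence $L=L_{\mathrm{sym}}$) together with $H=h$ and $|k_\Sigma+A|^2=|A|^2-\tfrac{h^2}{n-1}$ to match $Q_\Sigma$ with the potential in~\eqref{mustability}. All the intermediate identities check out against the paper's computation.
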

\begin{proof}
The first thing to note is that $W_\Sigma=0$ for our choice of $k$. Therefore, $L=L_\mathrm{sym}$ in this setting. To compute $Q_\Sigma$, we note the following, which the reader may easily verify:
\begin{align*}
    \tr k&=-\tfrac{n}{n-1}h,\\
    |k|^2&=\tfrac{n}{(n-1)^2}h^2,\\
    \nabla^j k_{ij}&=-\tfrac{1}{n-1}\nabla_i h,\\
    \nabla_i \tr k&=-\tfrac{n}{n-1}\nabla_i h.
\end{align*}
It follows that 
\begin{align*}
    2\mu&=R_g +\tfrac{n}{n-1}h^2\\
    J^i&= \nabla^ih.
\end{align*}
Putting these together yields 
\begin{align*}
2(\mu+\langle J,\nu\rangle)&=R_g+\tfrac{n}{n-1}h^2+2\langle \nabla h,\nu\rangle,\\
2(\mu -|J|)&=R_g+\tfrac{n}{n-1}h^2-2|\nabla h|,\end{align*}
The second equation verifies part (\ref{part2}). To complete the proof of part (\ref{part1}), if $\Omega$ is a $\mu$-bubble, then $H=h$, and we can see that 
\begin{align*}
|k_\Sigma+A|^2&= \tfrac{1}{n-1}h^2 -\tfrac{2}{n-1}hH + |A|^2\\
&= -\tfrac{1}{n-1}h^2  + |A|^2.
\end{align*}
Thus
\[Q_\Sigma=\tfrac 12 R_\Sigma - \tfrac 12 R_g - \tfrac 12 |A|^2- \tfrac 12 h^2- \langle \nabla h,\nu\rangle, \]
and we can explicitly see that the MOTS stability inequality  $\lambda(L_{\mathrm{sym}})\geq 0$ is the same as the $\mu$-bubble stability inequality~\eqref{mustability}.
\end{proof}

Using this, we observe that Lemma \ref{Zhu} can be seen as a special case of the existence theorem for stable MOTS. 

\begin{proof}[Proof of Lemma \ref{Zhu} using MOTS]
Since $h\to\pm\infty$ as we approach $\partial_\pm M$, hypersurfaces foliating a small neighborhood of $\partial M_+$ and $\partial M_-$ will be strictly trapped and untrapped, respectively. Hence we may apply the MOTS existence theory of L.~Andersson, M.~Eichmair, and J.~Metzger \cite{E09, Andersson-Metzger, AEM} to find a nontrivial MOTS in $M_0$. By part (\ref{part1}) of Proposition \ref{stableMOTS}, this gives us the desired nontrivial stable $\mu$-bubble.  
\end{proof}

This is a more complicated proof of the lemma, but it illustrates the general principle. We now use this viewpoint to give a surprising new proof of the quantitative shielding theorem, Theorem \ref{thm1}. We will also use it to prove Theorem \ref{CZ2}.

\begin{proof}[Proof of Theorem \ref{thm1}]
Assume the hypotheses of Theorem \ref{thm1}. In particular, we have $U_2\subset U_1\subset U_0$ on which $R_g\ge0$, and furthermore, on $\overline{U}_1\setminus U_2$, we have $R_g > \frac{4}{D_0 D_1}$ where $D_0=\dist_g(\partial U_0, U_1)$ and $D_1=\dist_g(\partial U_1, U_2)$. 

We first revisit the construction of the potential function $h$ from \cite{LUY21}.\footnote{The construction presented here is cleaner than in \cite{LUY21}, which leads to an improved constant, but the basic idea is the same.}  Let $\ve>0$ be small enough so that $\ve<\tfrac{1}{n-1}$ and 
\begin{equation}\label{epsrequirement}
    R_g>\frac{4(1+\ve)}{(D_0-2\ve)D_1}
\end{equation}
on the compact set $\overline{U}_1\setminus U_2$. Let $\rho=\dist_g(x,U_1)$ and let $\tilde\rho$ be a smoothing of $\rho$ on $U_0$ such  that $\rho$ also vanishes on $U_1$ and the following inequalities hold:
\begin{itemize}
    \item $\sup_{U_0}|\tilde\rho-\rho|<\ve$,
    \item $|\nabla\tilde\rho|<1+\ve$.
\end{itemize}
One can construct $\tilde\rho$ by mollifying the Lipschitz distance function from the $\ve/2$-neighborhood of $U_1$.  
Also, let $\vartheta$ be a smooth cutoff function such that $\vartheta=1$ on $M\setminus U_1$, $\vartheta=0$ on $U_2$, and $|\nabla \vartheta|\le (1+\ve)D_1^{-1}$ on $\overline U_1\setminus U_2$. To see that such a $\vartheta$ exists, first construct a Lipschitz function that is $1$ and $0$ on \emph{neighborhoods} of  $M\setminus U_1$ and $\overline{U}_2$, respectively, and then mollify.

Now select any $\alpha$ between $D_0-2\ve$ and $D_0-\ve$ such that the level set $\tilde\rho^{-1}(\alpha)$ is smooth. Then we define
\[h_1(x)=\begin{cases}
\frac{2}{\alpha-\tilde\rho}& \text{if }\tilde\rho(x)<\alpha\\
+\infty &\text{if }\tilde\rho(x)\ge \alpha
\end{cases}\]
and 
\[h=\vartheta h_1.\]

We claim that this choice of $h$ satisfies condition~\eqref{mububblecondition}. Note that $M_0=\overline{\{h<\infty\}}$ is contained in $U_0$, so there are three regions to analyze: $U_2$, $\overline U_1\setminus U_2$, and $U_0\setminus U_1$. The potential $h$ vanishes identically on $U_2$, so~\eqref{mububblecondition}
is trivially satisfied since $R_g\ge 0$ on $U_2$. On the region $\overline U_1\setminus U_2$, $h_1$ is a constant equal to $\frac{2}{\alpha}$, so we have
\begin{equation}\label{star-alpha}
\tfrac{n}{n-1}h^2-2|\nabla h|\ge -\frac{4(1+\ve)}{\alpha D_1}.
\end{equation}
Combining this with~\eqref{epsrequirement} and the definition of $\alpha$, we see that condition~\eqref{mububblecondition} holds in this region as well.
Finally, on $U_0\setminus U_1$, we have
\[
    2|\nabla h|=\frac{4}{(\alpha-\tilde\rho)^2}|\nabla\tilde\rho|\le \frac{4}{(\alpha-\tilde\rho)^2}(1+\ve)=(1+\ve) h_1^2\le \tfrac{n}{n-1} h_1^2
\]
Since $R_g\ge 0$ on $U_0$, we see that~\eqref{mububblecondition} holds on $U_0\setminus U_1$, and hence we have shown that it holds everywhere.

By Proposition~\ref{stableMOTS}, we observe that $(g, k_h)$ defines asymptotically flat initial data on $M_0$ that satisfies the dominant energy condition, and moreover, since $\partial M_0$ is smooth, the level sets of $h$ near $\partial M_0$, where $h$ is large, must be strictly outer trapped surfaces. The positivity of  $m_\mathrm{ADM}(\mathcal E, g)$ now follows from the \emph{spacetime positive mass theorem with boundary} \cite[Theorem 1.3]{LLU} applied to $(M_0, g, k_h)$. (Note that the nonnegativity follows from~\cite{GallowayLee} as well.) 

Finally, we note that \cite[Theorem 1.3]{LLU} requires $C^2_{-q}$ decay of the metric rather than the Sobolev decay in Definition~\ref{defAF}, 
but Theorem \ref{DensityTheorem} can be used to assume we have this decay without loss of generality when proving the \emph{nonnegativity} of mass. Then as explained in Section~\ref{sec:PMT}, positivity follows from the nonnegativity using Proposition~\ref{pushingdown}. 
\end{proof}

\begin{proof}[Proof of Theorem \ref{CZ2}] The goal of this proof is to choose $\alpha$ so that $\partial M$ is outer trapped with respect to the data set $(g,k_h)$, while maintaining the DEC. The result then follows from the spacetime positive mass theorem with boundary, as above. 

According to \eqref{star-alpha}, we can maintain DEC, which is equivalent to \eqref{mububblecondition}, by setting
\[\alpha = \frac{4(1+\ve)}{\kappa D_1}.\]
Meanwhile, $\partial M$ being outer trapped means that $H<\min_{\partial M}h$, so we compute
\[\min_{\partial M}h\ge \frac{2}{\alpha-D_0+\ve}=\frac{2}{\frac{4(1+\ve)}{\kappa D_1}- D_0+\ve}.\]
Taking $\ve\to0$ yields the desired bound appearing in Theorem \ref{CZ2}.

From here we can now invoke the spacetime positive mass theorem with boundary~\cite{GallowayLee, LLU}. Although these results assume $C^2_{-q}$ decay, we can reduce to this case using our density theorem (Theorem~\ref{DensityTheorem}).
\end{proof}

\bibliographystyle{alpha}
\bibliography{bib}

\begin{thebibliography}{EHLS16}

\bibitem[AEM11]{AEM}
Lars Andersson, Michael Eichmair, and Jan Metzger.
\newblock Jang's equation and its applications to marginally trapped surfaces.
\newblock In {\em Complex analysis and dynamical systems {IV}. {P}art 2},
  volume 554 of {\em Contemp. Math.}, pages 13--45. Amer. Math. Soc.,
  Providence, RI, 2011.

\bibitem[AM09]{Andersson-Metzger}
Lars Andersson and Jan Metzger.
\newblock The area of horizons and the trapped region.
\newblock {\em Communications in Mathematical Physics}, 290:941--972, 2009.

\bibitem[AMS05]{AMS05}
Lars Andersson, Marc Mars, and Walter Simon.
\newblock Local existence of dynamical and trapping horizons.
\newblock {\em Phys. Rev. Lett.}, 95:111102, Sep 2005.

\bibitem[Bar86]{BartnikMass}
Robert Bartnik.
\newblock The mass of an asymptotically flat manifold.
\newblock {\em Comm. Pure Appl. Math.}, 39(5):661--693, 1986.

\bibitem[BC05]{BC}
Robert Bartnik and Piotr Chru\'sciel.
\newblock Boundary value problems for dirac type equations.
\newblock {\em J. reine angew. Math.}, (579):13--73, 2005.

\bibitem[Chr86]{Chrusciel}
Piotr Chru\'{s}ciel.
\newblock Boundary conditions at spatial infinity from a {H}amiltonian point of
  view.
\newblock In {\em Topological properties and global structure of space-time
  ({E}rice, 1985)}, volume 138 of {\em NATO Adv. Sci. Inst. Ser. B: Phys.},
  pages 49--59. Plenum, New York, 1986.

\bibitem[CL20]{CL20}
Otis Chodosh and Chao Li.
\newblock Generalized soap bubbles and the topology of manifolds with positive
  scalar curvature.
\newblock {\em \emph{arXiv:2008.11888}}, 2020.

\bibitem[CLSZ21]{CLSZ21}
Jie Chen, Peng Liu, Yuguang Shi, and Jintian Zhu.
\newblock Incompressible hypersurface, positive scalar curvature and positive
  mass theorem.
\newblock {\em \emph{arXiv:2112.14442}}, 2021.

\bibitem[CZ20]{CZ20}
Simone Cecchini and Rudolf Zeidler.
\newblock Scalar and mean curvature comparison via the {D}irac operator.
\newblock {\em \emph{arXiv:2103.06833}}, 2020.

\bibitem[CZ21]{CZ21}
Simone Cecchini and Rudolf Zeidler.
\newblock The positive mass theorem and distance estimates in the spin setting.
\newblock {\em \emph{arXiv:2108.11972}}, 2021.

\bibitem[EHLS16]{EHLS}
Michael Eichmair, Lan-Hsuan Huang, Dan~A. Lee, and Richard Schoen.
\newblock The spacetime positive mass theorem in dimensions less than eight.
\newblock {\em J. Eur. Math. Soc. (JEMS)}, 18(1):83--121, 2016.

\bibitem[Eic09]{E09}
Michael Eichmair.
\newblock The {P}lateau problem for marginally outer trapped surfaces.
\newblock {\em J. Differential Geom.}, 83(3):551--583, 2009.

\bibitem[Eic13]{E13}
Michael Eichmair.
\newblock The {J}ang equation reduction of the spacetime positive energy
  theorem in dimensions less than eight.
\newblock {\em Comm. Math. Phys.}, 319:575–593, 2013.

\bibitem[GL21]{GallowayLee}
Gregory~J. Galloway and Dan~A. Lee.
\newblock A note on the positive mass theorem with boundary.
\newblock {\em Lett. Math. Phys.}, 111(4):Paper No. 111, 10, 2021.

\bibitem[Gro96]{G96}
Misha Gromov.
\newblock {\em Positive curvature, macroscopic dimension, spectral gaps and
  higher signatures}.
\newblock Functional analysis on the eve of the 21st century, Vol. II (New
  Brunswick), NJ, 1993, Progr. Math., vol. 132. 1996.

\bibitem[Gro18]{G18}
Misha Gromov.
\newblock Metric inequalities with scalar curvature.
\newblock {\em GAFA}, 28(6):645--726, 2018.

\bibitem[Gro20]{G20}
Misha Gromov.
\newblock No metrics with positive scalar curvature on aspherical
  $5$-manifolds.
\newblock {\em \emph{arXiv:2009.05332}}, 2020.

\bibitem[Gro21]{GromovFour}
Misha Gromov.
\newblock Four lectures on scalar curvature.
\newblock {\em \emph{arXiv:1908.10612v4}}, 2021.

\bibitem[GS06]{GS06}
Gregory~J. Galloway and Richard Schoen.
\newblock A generalization of {H}awking's black hole topology theorem to higher
  dimensions.
\newblock {\em Comm. Math. Phys.}, 266(2):571--576, 2006.

\bibitem[Kuw90]{Kuwert}
Ernst~C. Kuwert.
\newblock {\em \emph{Der {M}inimalfl\"achenbeweis des {P}ositive {E}nergy
  {T}heorem}}.
\newblock Diplomarbeit, Universit\"at Bonn, 1990.

\bibitem[Lee19]{Lee}
Dan~A. Lee.
\newblock {\em Geometric Relativity}.
\newblock \emph{Graduate Studies in Mathematics, Vol. 201, AMS}, 2019.

\bibitem[Li18]{LiYu}
Yu~Li.
\newblock Ricci flow on asymptotically {E}uclidean manifolds.
\newblock {\em Geom. Topol.}, 22(3):1837--1891, 2018.

\bibitem[LLU21]{LLU}
Dan~A. Lee, Martin Lesourd, and Ryan Unger.
\newblock Density and positive mass theorems for initial data sets with
  boundary.
\newblock {\em \emph{arXiv:2112.12017}}, 2021.

\bibitem[Loh99]{L99}
Joachim Lohkamp.
\newblock Scalar curvature and hammocks.
\newblock {\em Math. Ann.}, 313(3):385--407, 1999.

\bibitem[LP87]{LeeParker}
John~M. Lee and Thomas~H. Parker.
\newblock The {Y}amabe problem.
\newblock {\em Bull. Amer. Math. Soc. (N.S.)}, 17(1):37--91, 1987.

\bibitem[LUY21]{LUY21}
Martin Lesourd, Ryan Unger, and Shing-Tung Yau.
\newblock The positive mass theorem with arbitrary ends.
\newblock {\em \emph{To appear in} J. Differential Geom.}, 2021.

\bibitem[McO79]{McO79}
Robert~C. McOwen.
\newblock The behavior of the {L}aplacian on weighted {S}obolev spaces.
\newblock {\em Comm. Pure Appl. Math.}, 32(6):783--795, 1979.

\bibitem[Pet16]{Petersen}
Peter Petersen.
\newblock {\em Riemannian geometry}, volume 171 of {\em Graduate Texts in
  Mathematics}.
\newblock Springer, Cham, third edition, 2016.

\bibitem[SY79]{SY79PMT}
Richard Schoen and Shing~Tung Yau.
\newblock On the proof of the positive mass conjecture in general relativity.
\newblock {\em Comm. Math. Phys.}, 65(1):45--76, 1979.

\bibitem[SY81a]{SY81ii}
Richard Schoen and Shing~Tung Yau.
\newblock The energy and the linear momentum of space-times in general
  relativity.
\newblock {\em Comm. Math. Phys.}, 79(1):47--51, 1981.

\bibitem[SY81b]{SY81i}
Richard Schoen and Shing~Tung Yau.
\newblock Proof of the positive mass theorem. {II}.
\newblock {\em Comm. Math. Phys.}, 79(2):231--260, 1981.

\bibitem[Zhu20]{Z20}
Jintian Zhu.
\newblock Rigidity results for complete manifolds with nonnegative scalar
  curvature.
\newblock {\em \emph{arXiv:2008.07028}}, 2020.

\bibitem[Zhu21]{Z21}
Jintian Zhu.
\newblock Width estimate and doubly warped product.
\newblock {\em Trans. Amer. Math. Soc.}, 374(2):1497--1511, 2021.

\end{thebibliography}
\end{document}